\documentclass{article}[a4paper]

% 导言区
\usepackage{lineno}%行号
\usepackage{amsthm} 
\usepackage{amsmath} % 数学公式
\usepackage{amssymb} % 数学符号
\usepackage{mathrsfs}%花体
\usepackage{graphicx} % 插入图片
\usepackage{cite} % 引用文献
\usepackage{url} % 插入超链接
\usepackage{setspace} % 行间距
\usepackage{geometry} % 页面设置
\usepackage{hyperref} % 文献跳转
\hypersetup{hidelinks} % 去除引用文献的绿框
\usepackage{todonotes} % 提供批注功能
\usepackage{color} % 加颜色
\usepackage{indentfirst}%首段缩进
\usepackage{cases}
\usepackage{bm}
\usepackage{float}
\usepackage{subcaption}
\usepackage[section]{placeins}
\usepackage{graphicx}
 %remark
\numberwithin{equation}{section}

\newtheorem{thm}{Theorem}[section]
\newtheorem{lem}[thm]{Lemma}

\newtheorem{cor}[thm]{Corollary}
\newtheorem{prop}{Proposition}[section]
\newtheorem{rem}[thm]{Remark}

% 页面设置（可根据需要进行调整）
\geometry{
	letterpaper,
	left=1in,
	right=1in,
	top=1in,
	bottom=1in,
}

% 正文区
\begin{document}
	\title{Standing wave solutions of $abcd$-systems for water waves}

\author{Peifei Song\\
	School of Mathematical Sciences,\\
	University of Electronic Science and Technology of China,\\
	Chengdu, Sichuan 610054, China \\
	\texttt{songpf@std.uestc.edu.cn}	\and
	Yuhao Xie\\
	School of Mathematical Sciences,\\
	University of Electronic Science and Technology of China,\\
	Chengdu, Sichuan 610054, China \\
	\texttt{xieyuhao@std.uestc.edu.cn}	\and
	Min Chen\\
	Department of Mathematics,\\
Eastern Institute of Technology,\\  Ningbo,Zhejiang 315200, China\\ Purude University,\\ IN 47907, USA\\ 	\texttt{chen45@purdue.edu}	\and
	Shenghao Li\footnote{Corresponding author} \\
	School of Mathematical Sciences,\\
	University of Electronic Science and Technology of China,\\
	Chengdu, Sichuan 610054, China \\
	\texttt{lish@uestc.edu.cn}}
\date{}

% 生成标题页
\maketitle
\begin{abstract}
We continue the study for  standing wave solutions of $abcd$-systems which was started by Chen and Iooss \cite{chen2005standing} for the BBM system via the Lyapunov-Schmidt method. In this paper, we will first discuss the feasibility of the Lyapunov-Schmidt method for bifurcating standing wave solutions of  $abcd$-systems. These systems will be characterized into three categories: feasible, infeasible and uncertain feasible ones. In particular, we prove the existence of nontrivial bifurcating standing waves for the Bona-Smith system.
\end{abstract}

{\it Mathematics Subject Classification}: 35Q35, 35G25, 35D30.

{\it Keywords}: $abcd$-systems, Bifurcating standing waves, Lyapunov-Schmidt method.

\section{Introduction}

 A  class of equations with four parameters, known as $abcd$-systems,
\begin{equation}
	\label{1}
	\left\{ \begin{array} {l}
		\eta_t+u_x+(u \eta)_x+au_{xxx}-b \eta_{x x t}=0,\\
		u_t+\eta_x+u u_x+c \eta_{x x x}-du_{x x t}=0,\\
	\end{array} \right.
\end{equation}
 was introduced by Bona, Chen and Saut \cite{Bona2002283,Bona2004925},  which describes the two-way propagation of small-amplitude, long wavelength of an incompressible, inviscid fluid in a uniform
horizontal channel of finite depth. System (\ref{1}) is the first-order approximations  of two-dimensional Euler equations for free-surface flow. The coordinate $ x $ which measures the distance along channel is scaled by $ h_0 $ and time $ t $ is scaled by $\sqrt{h_0 / g} $ , with $ h_0 $ being the average depth of water in its quiescent state and $ g $ being acceleration of gravity. The dependent variables $\eta(x, t)$ and $u(x, t)$, scaled by $ h_0 $ and $\sqrt{g h_0}$ respectively, represent the dimensionless deviation of the water surface from its undisturbed position and the horizontal velocity at the certain depth of the undisturbed fluid. The parameters $a$, $b$, $c$ and $d$ appearing in system (\ref{1}) are not independently specified, obeying the relations,
\begin{equation}
\begin{aligned}
	a&=  \frac{1}{2}\left(\theta^2-\frac{1}{3}\right) \lambda, & b & =\frac{1}{2}\left(\theta^2-\frac{1}{3}\right)(1-\lambda), \\
	c & =\frac{1}{2}\left(1-\theta^2\right) \mu,~ & d & =\frac{1}{2}\left(1-\theta^2\right)(1-\mu),
\end{aligned}
\end{equation}
where $\theta \in[0,1]$ and $\lambda, \mu \in \mathbb{R}$. Hence, this leads to,
\begin{equation}
\begin{aligned}
		\label{50}
	a+b & =\frac{1}{2}\left(\theta^2-\frac{1}{3}\right), \\
	c+d & =\frac{1}{2}\left(1-\theta^2\right) \geq 0, \\
	a+b+c+d & =\frac{1}{3}.
\end{aligned}
\end{equation}

According to   previous studies \cite{chen2005standing,CHEN2006393,li2019diamond,li2019standing} on the Coupled BBM-system $(a=c=0, b=d=\frac{1}{6})$, we notice that the Lyapunov-Schmidt method plays an important role in the study for the existence of bifurcating standing wave or traveling wave solutions. In addition, Chen and Iooss \cite{chen2005standing} point out that  the situation on the Coupled BBM-system is very different from the standing gravity waves problem for the full water wave equation solved by Plotnikov et al. in finite \cite{iooss2005standing} and infinite depth case \cite{plotnikov2001nash}. In the finite depth case \cite{iooss2005standing}, there is a delicate \emph{small divisor problem} in the inversion of the linearized operator. In the infinite depth case \cite{plotnikov2001nash}, there occurs an \emph{infinite dimensional  kernel problem} for the linearized operator at the critical value of the parameter, which presents the additional difficulty of complete resonance. One can avoid  these two problems while conduct studies on some $abcd-$systems, and,  based one Chen-Iooss's  note \cite{chen2005standing}, the Lyapunov-Schmidt method  might apply on the model system (1)  when
$abcd < 0$, or $b = c = d = 0,~a> 0$ (the Kaup system \cite{kaup1975higher,sachs1988integrable}), or when $a = b = c = 0,~d> 0$ (the so called classical Boussinesq
system \cite{boussinesq1871theorie,peregrine1972equations,wu1994bidirectional,schonbek1981existence}) or when $a = 0,~b = d > 0,~c< 0$  (such as the Bona-Smith system \cite{bona1976model}). 

In this article, we will first systematically study the feasibility of the Lyapunov-Schmidt method for bifurcating standing wave solutions on $abcd$-systems, based on the prerequisite of avoiding the following two problems:
\begin{itemize}
	\item Infinite kernel problem,
	\item Small divisor problem.
\end{itemize}
While we establish the desired bifurcation solutions, local well-posedness theories for the system is required. Thanks to the previous study   \cite{Bona2002283} that 
the Cauchy problem of linear $abcd$-systems
is well-posed  if and only if one of the following sets of conditions is satisfied:
$$
\begin{aligned}
&(C1)~a \leq 0, \quad c \leq 0, \quad b \geq 0, \quad d \geq 0;\\
&(C2)~a=c>0, \quad b \geq 0, \quad d \geq 0;\\
&(C3)~  a=c>0, \quad b=d<0.
\end{aligned}
$$
 And in the study of nonlinear $abcd$-systems in \cite{Bona2004925}, it has been shown that systems satisfying $(C1)$ and $(C2)$ are  well-posed. However, due to the limited practical applicability of systems satisfying $(C3)$, their well-posedness in the nonlinear case has not been extensively discussed. Therefore, the scope of our research is limited to systems satisfying conditions $(C1)$ and $(C2)$. Through the analysis, we categorize the systems with well-posedness into three classes under the conditions in (\ref{50}). The first class is the feasible systems including three types of $abcd$-systems as follows:
\begin{itemize}
	\item[$\bullet$] $a=0,~b>0,~c<0,~d>0$, which includes the Bona-Smith system with,
$$
a=0,~b=d=\frac{1-\mu}{3(2-\mu)}>0 ~\text { and }~ c=\frac{\mu}{3(2-\mu)}<0,
$$	
corresponding to the case where $\theta^2=\left(\frac{4}{3}-\mu\right) /(2-\mu),~\lambda=0,~\mu<0$;
	\item[$\bullet$]$a<0,~b>0,~c=0,~d>0$;
    \item[$\bullet$]$a=0,~b>0,~c=0,~d>0$, whcih includes the Coupled BBM-system with,
$$
a=c=0,~b=d=\frac{1}{6}.
$$
\end{itemize}
The second class is the infeasible systems including $2$ types $abcd$-systems, where the small divisor problem exists. The third class is the uncertain feasible systems including $11$ types $abcd$-systems, where the kernel space dimension might be finite for only a small subset of parameters. In addition, these problems may be overcome to a certain extent by KAM theory \cite{berti2019kam,chierchia2000kam
}, Nash-Moser theorem \cite{iooss2015small,plotnikov2001nash}, global bifurcation theory \cite{rabinowitz1971some} and other improved methods.

Next, our study will focus on the Bona-Smith system as follows, whose theories are quite different from the Coupled BBM-system (See. \cite{Bona2002283} and the  references therein),
\begin{equation}
	\label{2}
	\left\{ \begin{array} {l}
	 \eta_t+u_x+(u \eta)_x-b \eta_{x x t}=0,\\
	 u_t+\eta_x+u u_x+c \eta_{x x x}-b u_{x x t}=0.\\
\end{array} \right.
\end{equation}
The critical case ($\mu \rightarrow-\infty$) of (\ref{2}), where $
b=\frac{1}{3} \text { and } c=-\frac{1}{3}$, is introduced by Bona and Smith \cite{bona1976model}. We prove the existence of nontrivial bifurcating standing waves for Bona-Smith system. The bifurcation set is formed with the discrete union of Whitney's umbrellas in the three-dimensional space formed with 3 parameters representing the time-period, the wave length, and the average of wave amplitude.

The paper is organized as follows. In section 2, we discuss the feasibility of the Lyapunov-Schmidt method for bifurcating standing wave solutions on $abcd$-systems, based on the well-posedness results of the systems. In particular, we prove the existence of nontrivial bifurcating standing waves for Bona-Smith system in the later sections. In Section 3, we present the linearized problem and obtain its solution near zero. In Section 4, the Lyapunov-Schmidt method is applied to derive the bifurcation standing waves. In Section 5, we present an illustrative example of standing waves as predicted by the theorems.
\section{Research on feasibility}

 In this section, we will discuss the feasibility of the Lyapunov-Schmidt method for bifurcating standing wave solutions on $abcd$-systems. As pointed out in the introduction, whether the Lyapunov-Schmidt method can be applied to $abcd$-systems for bifurcating standing wave solutions depends on two problems: the \emph{infinite kernel problem} and the\emph{ small divisor problem}. Moreover, to obtain the bifurcation solutions, local well-posedness is required. Therefore, the scope of our research is limited to systems that satisfy conditions $(C1)$ and $(C2)$.

 Since we are looking for standing wave  solutions in $(x,t)$, let us first introduce the scaled variables $\tilde{x}=\frac{2 \pi}{\lambda} x$, $\tilde{t}=\frac{2 \pi}{T} t$, with $\lambda$ and $T$ being the wave length and time period. Then re-scaled $abcd$-systems of (\ref{1}) are obtained by dropping the tilde,
\begin{equation}
	\label{3}
	\left\{\begin{array}{l}
		\eta_t+\beta u_x+\alpha \beta a u_{xxx}-\alpha b \eta_{x x t}+\beta\left(u\eta\right)_x=0, \\
		u_t+\beta \eta_x+\alpha \beta c \eta_{x x x}-\alpha du_{x x t}+\beta u u_x=0,
	\end{array}\right.
\end{equation}
where $\alpha$ and $ \beta$ are positive parameters defined by,
$$
\alpha=(2 \pi)^2 / \lambda^2, \quad \beta=T / \lambda.
$$
The standing waves we are looking for are solutions $(\eta,u)$ doubly $2 \pi$-periodic functions of $(x,t)$. Then we write the linearized system of ($\ref{3}$) as follows, providing with $f$ given odd in $x$, $g$ given even in $x$,
\begin{equation}
	\label{4}
	\left\{ \begin{array} {l}
		\eta_t+\beta u_x+\alpha \beta a u_{xxx}-\alpha b \eta_{x x t}=f_x, \\
		u_t+\beta \eta_x+\alpha \beta c \eta_{x x x}-\alpha du_{x x t}=g_x,\\
	\end{array} \right.
\end{equation}
where we look for solutions $(\eta,u)$ with $\eta$ even in $x$ and $u$ odd in $x$. Based on the above properties of functions, we introduce the Sobolev spaces as follows,
$$
H_{\natural\natural}^k=H^k\left\{(\mathbb{R} / 2 \pi \mathbb{Z})^2\right\},
$$
$$
H_{\natural\natural}^{k, e}=\left\{w \in H_{\natural\natural}^k ; w \text { is even in } x\right\},H_{\natural\natural}^{k, o}=\left\{w \in H_{\natural\natural}^k ; w \text { is odd in } x\right\}.
$$In order to use a suitable linearized operator to describe the system ($\ref{4}$), we now define operator $\pi_0$ by,
\begin{equation}
	\label{5}
	\left(\pi_0 g\right)(t)=\frac{1}{2 \pi} \int_{-\pi}^\pi g(x, t)~ \mathrm{d} x,
\end{equation}
and $D_x^{-1}$ by the inverse of the differential operator, which suppresses the average firstly and then takes the primitive with an average of 0. In consideration of the evenness  of $\eta$ and the oddness of $u$, the linearized operator $\mathcal{L}$ is defined as follows,
$$
	\begin{aligned}
		\mathcal{L}(\eta, u) & =D_x^{-1}\left(u_t+\beta \eta_x+\alpha \beta c \eta_{x x x}-\alpha du_{x x t}, \eta_t+\beta u_x+\alpha \beta a u_{xxx}-\alpha b \eta_{x x t}\right) \\
		& =\left(D_x^{-1} u_t+\beta\left(\mathbb{I}-\pi_0\right) \eta+\alpha \beta c\left(\mathbb{I}-\pi_0\right)\eta_{xx}-\alpha d\left(\mathbb{I}-\pi_0\right)u_{x t}, D_x^{-1}\eta_t+\beta u+\alpha\beta au_{xx}-\alpha b \eta_{x t}\right).
	\end{aligned}
$$
The system (\ref{4}) is then written as,
\begin{equation}
	\label{7}
	\mathcal{L}U=F,
\end{equation}
where $U=(\eta,u), F=(\tilde{g},f)=(\left(\mathbb{I}-\pi_0\right) g,f)\in H_{\natural\natural}^{k, e} \times H_{\natural\natural}^{k, o}$, which is equivalent to $(g,f)\in H_{\natural\natural}^{k, e} \times H_{\natural\natural}^{k, o}, k\geqslant0$.

Let us write the Fourier series,
\begin{equation}
\begin{aligned}
	\label{47}
	& \eta(x, t)=\sum_{p \geqslant 0, p, q \in \mathbb{Z}} \eta_{p q}(\cos p x) \mathrm{e}^{\mathrm{i} q t}, ~
	& u(x, t)=\sum_{p>0, p, q \in Z} u_{p q}(\sin p x) \mathrm{e}^{\mathrm{i} q t},\\
	& f(x, t)=\sum_{p>0, p, q \in Z} f_{p q}(\sin p x) \mathrm{e}^{\mathrm{i} q t}, ~
	& g(x, t)=\sum_{p \geqslant 0, p, q \in Z} {g}_{p q}(\cos p x) \mathrm{e}^{\mathrm{i} q t}.
\end{aligned}
\end{equation}
Then, substituting the above Fourier series into (\ref{4}), we obtain that,
\begin{itemize}
	\item[$(i)$] for $p > 0$, $q \in \mathbb{Z}$, where $\tilde{g}_{p q}=g_{p q}$ can be checked, 
	\begin{equation}
		\label{8}
		\left\{\begin{array}{l}
			i q\left(1+\alpha b p^2\right) \eta_{p q}+\beta p\left(1-\alpha a p^2\right)u_{pq}=p f_{pq},  \\
			\beta p\left(1-\alpha c p^2\right) \eta_{p q}-i q\left(1+\alpha d p^2\right)u_{pq}=p \tilde{g}_{p q};
		\end{array}\right.
	\end{equation}
	\item[$(ii)$] for $p=0$, $q \in \mathbb{Z}$,
	$$
	\eta_{0 q}=0, ~\mathrm {when}~q \neq 0~\mathrm{and}~ \eta_{00}~\mathrm{is}~\mathrm{arbitrary}.
	$$
\end{itemize}
For system (\ref{8}), let us define, 
\begin{equation}
	\label{9}
	\Delta(p,q)=q^2\left(1+\alpha b p^2\right)\left(1+\alpha d p^2\right)-\beta^2 p^2\left(\alpha a p^2-1\right)\left(\alpha c p^2-1\right).
\end{equation}
If $\Delta(p,q) = 0$, the solutions to the equation $\Delta(p,q)=0$ correspond to the non-trivial kernel of $\mathcal{L}$. To ensure that the kernel space of the linearized operator $\mathcal{L}$ is finite-dimensional, it is necessary that the equation $\Delta(p,q)= 0$ has only a finite number of solutions in $ \mathbb{N} \times \mathbb{Z}$.  Hence, the discussion on $\Delta(p,q)$ is crucial for our study, as it is the key to exploring the \emph{infinite kernel problem}.\\
If $\Delta(p,q) \neq 0$, we can solve (\ref{8}) as follows,
\begin{equation}
	\label{10}
	\begin{aligned}
		\eta_{p q}=-\Delta^{-1} p\left[i q\left(1+\alpha dp^2\right) f_{p q}+ \beta p(1-\alpha ap^2)\tilde{g}_{p q}\right], 
	\end{aligned}
\end{equation}	
\begin{equation}
	\label{11}
	\begin{aligned}
		u_{p q}=-\Delta^{-1} p\left[\beta p\left(1-\alpha cp^2\right) f_{p q }-i q\left(1+\alpha bp^2\right) \tilde{g}_{p q}\right],
	\end{aligned}
\end{equation}
and then we denote $A$, $B$ in the following for later use,
\begin{equation}
\begin{aligned}
	\label{48}
	& A:=\frac{p\left|q\left(1+\alpha d p^2\right)\right|+\beta p^2 |\left(1-\alpha cp^2 \right)|}{| q^2\left(1+\alpha b p^2\right)\left(1+\alpha d p^2\right)-\beta^2 p^2\left(\alpha a p^2-1\right)\left(\alpha cp^2-1 \right)|},\\
	& B:=\frac{\beta p^2\left|\left(1-\alpha a p^2\right)\right|+p|q\left(1+\alpha b p^2\right)|}{| q^2\left(1+\alpha b p^2\right)\left(1+\alpha d p^2\right)-\beta^2 p^2\left(\alpha a p^2-1\right)\left(\alpha cp^2-1 \right)|}.
\end{aligned}
\end{equation}
To avoid  the \emph{small divisor problem}, we will need to give a bound for the pseudo-inverse of the linearized operator $\mathcal{L}$, which corresponds to giving estimates for $\left(\eta_{p q}, u_{p q}\right)$ in terms of $\left(f_{p q}, \tilde{g}_{p q}\right)$ for noncritical couples. Therefore, we will need to verify that $A$ and $B$ are bounded(independent of the choices for($p,q$)). 
\subsection{Examples of three cases}
Based on the above conditions, we will study three different cases (including feasible case, unfeasible case and uncertain feasible case) as examples to demonstrate the process of verifying the feasibility of the Lyapunov-Schmidt method. 
\vspace{0.1cm}\\{\bm{{Case \uppercase\expandafter{\romannumeral1} :~$a=0,~b>0,~c<0,~d>0$}}.
\begin{itemize}
	\item[$\bullet$] \textbf{Infinite kernel problem} 
\end{itemize}

	To verify the dimension of the kernel space is finite, the key is that the equation $\Delta(p,q)= 0$ has only a finite number of solutions in $\mathbb{N} \times \mathbb{Z}$.  Following from ($\ref{9}$), when $\Delta(p,q) = 0$, we have,
\begin{equation}
\label{49}
	q^2\left(1+\alpha b p^2\right)\left(1+\alpha d p^2\right)+\beta^2 p^2\left(\alpha c p^2-1\right)=0,
\end{equation}
and one has,
$$
q^2=-\frac{\beta^2 p^2\left(\alpha c p^2-1\right)}{\left(1+\alpha b p^2\right)\left(1+\alpha d p^2\right)}.
$$
We can define the sequence of number $\left\{a_p\right\}$ as follows,
$$
	a_p:=-\frac{\beta^2 p^2\left(\alpha c p^2-1\right)}{\left(1+\alpha b p^2\right)\left(1+\alpha d p^2\right)},~ p\in\mathbb{N}.
$$
Let $ p $ tend to $+\infty$, we have,
$$
\lim _{p \rightarrow+\infty} a_p=-\lim _{p \rightarrow +\infty} \frac{\beta^2 p^2\left(\alpha c p^2-1\right)}{\left(1+\alpha b p^2\right)\left(1+\alpha d p^2\right)}=-\frac{\beta^2c}{\alpha bd},
$$
 which represents the positive value number of $q^2$ is finite. Then for a fixed value of $q$, recalling from (\ref{49}),  $p$ is the solution of the following equation,
\begin{equation}
	\label{13}
	\left(\alpha^2 b d q^2-\alpha \beta^2 c\right) p^4+\left(\alpha b q^2+\alpha d q^2-\beta^2\right) p^2+q^2=0,
\end{equation}
where there are at most four possible values of $p$ such that (\ref{13}) holds. This finishes the proof that $\Delta(p,q)= 0$ has only finite solutions in $\mathbb{N} \times \mathbb{Z}$.
\begin{itemize}
	\item[$\bullet$]  \textbf{Small divisor problem}
\end{itemize}

 Recalling from (\ref{48}), when $a=0,~b>0,~c<0,~d>0$, we have,
$$
\begin{aligned}
	& A=\frac{p\left|q\left(1+\alpha d p^2\right)\right|+\beta p^2 |\left(1-\alpha cp^2 \right)|}{| q^2\left(1+\alpha b p^2\right)\left(1+\alpha d p^2\right)+\beta^2 p^2\left(\alpha cp^2-1 \right)|},\\
	& B=\frac{\beta p^2+p|q\left(1+\alpha b p^2\right)|}{| q^2\left(1+\alpha b p^2\right)\left(1+\alpha d p^2\right)+\beta^2 p^2\left(\alpha cp^2-1 \right)|}.
\end{aligned}
$$
By comparing the order of $p$ and $q$ in the numerator and denominator of $A$ and $B$, there exists a constant $M>0$ (depending only on ($\alpha, \beta$ and $b, c, d$)) such that for any pair $(p, q) \in \mathbb{N} \times \mathbb{Z}$ satisfying $\Delta(p, q)\neq0$ and $(p, q) \neq(0,0)$, we have,
$$
A\leqslant M,~~B\leqslant M.
$$
Hence, based on(\ref{10})-(\ref{11}), and by applying the triangle inequality, we obtain the following result,
$$
	\left|\eta_{p q}\right|+\left|u_{p q}\right| 
	\leqslant M(|f_{pq}|+|\tilde{g}_{p q}|),
$$
which can give estimates for $\left(\eta_{p q}, u_{p q}\right)$ in terms of $\left(f_{p q}, \tilde{g}_{p q}\right)$, and then it ensures the boundedness for the pseudo-inverse of the linearized operator $\mathcal{L}$ on its range.

Therefore, when $a=0,~b>0,~c<0,~d>0$, the $abcd$-systems can apply the Lyapunov-Schmidt method for bifurcating standing wave solutions.
\vspace{0.1cm}\\{\bm{{Case \uppercase\expandafter{\romannumeral2} :~$a=c=d=0,~b=\frac{1}{3}$}}.
\begin{itemize}
	\item[$\bullet$]  \textbf{Small divisor problem}
\end{itemize}

 Recalling from (\ref{48}), when $a=c=d=0,~b=\frac{1}{3}$, we have,
$$
\begin{aligned}
 A=\frac{p\left|q\right|+\beta p^2} {| q^2\left(1+\frac{1}{3}\alpha  p^2\right)-\beta^2 p^2|},~~
 B=\frac{\beta p^2+p|q\left(1+\frac{1}{3}\alpha p^2\right)|}{| q^2\left(1+\frac{1}{3}\alpha p^2\right)-\beta^2 p^2|}.
\end{aligned}
$$
For $B$, we can define the sequence of number $\left\{b_p\right\}$ for a fixed $q$ as follows,
$$
	b_p:=\frac{\beta p^2+p|q\left(1+\frac{1}{3}\alpha p^2\right)|}{| q^2\left(1+\frac{1}{3}\alpha p^2\right)-\beta^2 p^2|},~ p\in\mathbb{N}.
$$
Let $ p $ tend to $+\infty$, we have,
$$
 \lim _{p \rightarrow+\infty} b_p=\lim _{p \rightarrow+ \infty}\frac{\beta p^2+p|q\left(1+\frac{1}{3}\alpha p^2\right)|}{| q^2\left(1+\frac{1}{3}\alpha p^2\right)-\beta^2 p^2|}\rightarrow +\infty,
$$
which  represents the existence of a small divisor problem. 

Therefore, when $a=c=d=0,~b=\frac{1}{3}$, the Lyapunov-Schmidt method cannot be used for bifurcating standing wave solutions.
\vspace{0.1cm}\\{\bm{{Case \uppercase\expandafter{\romannumeral3} :~$a<0,~b>0,~c<0,~d>0$}}.
\begin{itemize}
	\item[$\bullet$] \textbf{Infinite kernel problem}
\end{itemize}

As in the case \uppercase\expandafter{\romannumeral1}, when $\Delta(p,q) = 0$,  the following results are obtained from (\ref{9}),
$$
q^2\left(1+\alpha b p^2\right)\left(1+\alpha d p^2\right)-\beta^2 p^2\left(\alpha a p^2-1\right)\left(\alpha c p^2-1\right)=0,
$$
and one has,
$$
q^2=\frac{\beta^2 p^2\left(\alpha a p^2-1\right)\left(\alpha c p^2-1\right)}{\left(1+\alpha b p^2\right)\left(1+\alpha d p^2\right)}.
$$
We can define the sequence of number $\left\{c_p\right\}$ as follows,
$$
c_p:=\frac{\beta^2 p^2\left(\alpha a p^2-1\right)\left(\alpha c p^2-1\right)}{\left(1+\alpha b p^2\right)\left(1+\alpha d p^2\right)},~ p\in\mathbb{N}.
$$
Let $ p $ tend to $+\infty$, we have,
$$
\lim _{p \rightarrow+\infty} c_p=\lim _{p \rightarrow+\infty} \frac{\beta^2 p^2\left(\alpha a p^2-1\right)\left(\alpha c p^2-1\right)}{\left(1+\alpha b p^2\right)\left(1+\alpha d p^2\right)}\rightarrow +\infty,
$$ 
which represents the positive value number of $q^2\in \mathbb{R}$ is infinite. Therefore, according to different parameters $\alpha, \beta$ and $a, b, c, d $, there will be the following two cases:
\begin{itemize}
	\item[$(1)$]  an infinite number of pairs  $(p, q) \in \mathbb{N} \times \mathbb{Z}$ satisfies $\Delta(p,q) = 0$ ; 
\end{itemize}
\begin{itemize}
	\item[$(2)$]  only a finite number of pairs  $(p, q) \in \mathbb{N} \times \mathbb{Z}$ satisfies $\Delta(p,q) = 0$.
\end{itemize}
 Only when the second case is satisfied, we can use the Lyapunov-Schmidt method for bifurcating standing wave solutions.
\begin{itemize}
	\item[$\bullet$]  \textbf{Small divisor problem}
\end{itemize}

Recalling from the process of Case \uppercase\expandafter{\romannumeral1}, it is easy to find that we can give estimates for $\left(\eta_{p q}, u_{p q}\right)$ in terms of $\left(f_{p q}, \tilde{g}_{p q}\right)$, and then allow to give a bound of the pseudo-inverse of the linearized operator $\mathcal{L}$ on its range. 

Therefore, when $a=0,~b>0,~c<0,~d>0$ , the applicability of the Lyapunov-Schmidt method  to the $abcd$-systems for bifurcating standing wave solutions depends on the parameters $\alpha, \beta$ and $a, b, c, d $.

\subsection{Conclusion}

Similar to the study process in Section 2.1,  regarding whether the Lyapunov-Schmidt method can be used for bifurcating standing wave solutions, we categorize the systems with
 well-posedness into three classes under condition ($\ref{50}$). The first class consists of feasible systems, including three types of $abcd$-systems. The second class consists of infeasible systems, including two types of $abcd$-systems, where the small divisor problem exists. The third class consists of uncertain feasible systems, including eleven types of $abcd$-systems; only when the parameters $\alpha, \beta$ and $a, b, c, d $  satisfy the condition that the kernel space is finite-dimensional can the Lyapunov-Schmidt method be used.

\begin{itemize}
	\item[$(i)$]The feasible systems
	\item[$\bullet$]$a=0,~b>0,~c<0,~d>0$, and it includes the Bona-Smith system with,
	$$
	a=0,~b=d=\frac{1-\mu}{3(2-\mu)}>0 ~ \text { and } ~c=\frac{\mu}{3(2-\mu)}<0,
	$$
	corresponding to the case where $\theta^2=\left(\frac{4}{3}-\mu\right) /(2-\mu),~\lambda=0,~\mu<0$;

	\item[$\bullet$]$a<0,~b>0,~c=0,~d>0$;
	\item[$\bullet$]$a=0,~b>0,~c=0,~d>0$, and it includes the Coupled BBM-system with,
	$$
	a=c=0,~b=d=\frac{1}{6}.
	$$

	\item[$(ii)$] The infeasible systems
	\item[$\bullet$]$a=c=d=0,~b=\frac{1}{3}$;
	\item[$\bullet$]$a=b=c=0,~d=\frac{1}{3}$,~which is called Classical Boussinesq system. 
	\item[$(iii)$] The uncertain feasible systems\\
	This class constitutes the  remaining eleven cases, including the Coupled KdV system with,
$$
a=c=\frac{1}{6},~b=d=0.
$$

\end{itemize}

\section{Linearized operator of Bona-Smith system}

In the subsequent sections of this paper, we will focus on the existence of nontrivial bifurcating standing waves for the Bona-Smith system. In Section 2, we only provide a brief proof of feasibility for 3 cases. A rigorous proof for the Bona-Smith system  will be presented later. Since we are looking for  standing wave solutions in $(x,t)$, let us now introduce the scaled variables,
$$\tilde{x}=\frac{2 \pi}{\lambda \sqrt{b}} x,\quad\tilde{t}=\frac{2 \pi }{T\sqrt{b}} t,$$with $\lambda \sqrt{b}$ and ${T\sqrt{b}}$ being the wave length and time period. The re-scaled Bona-Smith system is obtained by dropping the tilde,
\begin{equation}
	\label{15}
	\left\{ \begin{array} {l}
		\eta_t+\beta u_x-\alpha \eta_{x x t}+\beta(u \eta)_x=0, \\
		u_t+\beta \eta_x-\gamma \alpha \beta \eta_{x x x}-\alpha u_{x x t}+\beta u u_x=0,\\
	\end{array} \right.
\end{equation}
where $\alpha$, $ \beta$ and $\gamma$ are positive parameters defined by,
$$
\alpha=(2 \pi)^2 / \lambda^2, \quad \beta=T / \lambda,\quad \gamma =-c/b\quad(0 <\gamma<1)  .
$$
The standing waves we are looking for are solutions $(\eta,u)$ doubly $2 \pi$-periodic functions of $(x,t)$.

We start by studying the linearized system, providing with $f$ given odd in $x$, $g$ given even in $x$,
\begin{equation}
\left\{ \begin{array} {l}
	\eta_t+\beta u_x-\alpha \eta_{x x t}=f_x, \\
 	u_t+\beta \eta_x-\gamma \alpha \beta \eta_{x x x}-\alpha u_{x x t}=g_x,\\
\end{array} \right.
	\label{16}
\end{equation}
where we look for solutions $(\eta,u)$ with $\eta$ even in $x$, $u$ odd in $x$.
We now define the linearized operator $\mathcal{L}$ as follows,
\begin{equation}
	\label{17}
\begin{aligned}
\mathcal{L}(\eta, u) & =D_x^{-1}\left(u_t+\beta \eta_x-\gamma \alpha \beta \eta_{x x x}-\alpha u_{x x t}, \eta_t+\beta u_x-\alpha \eta_{x x t}\right) \\
& =\left(D_x^{-1} u_t+\beta\left(\mathbb{I}-\pi_0\right) \eta-\gamma \alpha \beta\left(\mathbb{I}-\pi_0\right)\eta_{xx}-\alpha\left(\mathbb{I}-\pi_0\right)u_{x t}, D_x^{-1}\eta_t+\beta u-\alpha \eta_{x t}\right),
\end{aligned}
\end{equation}
where $\pi_0$ is defined in ($\ref{5}$).
Then the system (\ref{16}) is written as,
\begin{equation}
	\label{18}
\mathcal{L}U=F,
\end{equation}
where $U=(\eta,u), F=(\tilde{g},f)=(\left(\mathbb{I}-\pi_0\right) g,f)\in H_{\natural\natural}^{k, e} \times H_{\natural\natural}^{k, o}$, which is equivalent to $(g,f)\in H_{\natural\natural}^{k, e} \times H_{\natural\natural}^{k, o}, k\geqslant0$.
By substituting the  Fourier series expansion of $\eta, u$ and $f ,g$  from (\ref{47}) into (\ref{16}), we can obtain that,
\begin{itemize}
	\item[$(i)$] for $p > 0$, $q \in \mathbb{Z}$, where $\tilde{g}_{p q}=g_{p q}$ can be checked, 
	\begin{equation}
		\label{19}
		\left\{\begin{array}{l}
			i q\left(1+\alpha p^2\right) \eta_{p q}+ \beta p  u_{p q}=p f_{p q}, \\
			\left( \beta p+\gamma \alpha \beta p^3 \right) \eta_{p q}-i q\left(1+\alpha p^2\right) u_{p q}=p \tilde{g}_{p q};
		\end{array}\right.
	\end{equation}
	\item[$(ii)$] for $p=0$, $q \in \mathbb{Z}$,
$$
\eta_{0 q}=0, ~\mathrm {when}~q \neq 0~\mathrm{and}~ \eta_{00}~\mathrm{is}~\mathrm{arbitrary}.
$$
\end{itemize}
In the rest of the paper, we look for $\eta$  in the corresponding invariant subspace,
$$
H_{\natural\natural, 0}^{k, e}=\left\{\eta \in H_{\natural\natural}^{k, e} ; \eta_{0 q}=0, q \neq 0\right\} .
$$
We now start to solve the system (\ref{19}), and let us define, 
\begin{equation}
	\label{20}
	\Delta(p,q)=q^2(1+\alpha p^2)^2-\beta^2p^2 \left(1+\gamma \alpha p^2\right).
\end{equation}
If $\Delta(p,q) \neq 0$, we get,
\begin{equation}
	\label{21}
\begin{aligned}
	 \eta_{p q}=-\Delta^{-1} p\left[i q\left(1+\alpha p^2\right) f_{p q}+ \beta p\tilde{g}_{p q}\right], \quad\quad\quad\quad\quad
\end{aligned}
\end{equation}	
\begin{equation}
	\label{22}
\begin{aligned}
	u_{p q}=-\Delta^{-1} p\left[\left( \beta p+\gamma  \alpha \beta p^3\right) f_{p q }-i q\left(1+\alpha p^2\right) \tilde{g}_{p q}\right].
\end{aligned}
\end{equation}
If $\Delta(p,q)=0$ and $q\neq 0$, according to the second equation of (\ref{19}) we have,
$$
u_{p q}=\frac{p \tilde{g}_{p q}-\left( \beta p+\gamma  \alpha \beta p^3\right) \eta_{p q}}{-i q\left(1+\alpha p^2\right)},
$$
then it follows from the first equation of (\ref{19}) that,
$$
\left(q^2\left(1+\alpha p^2\right)^2- \beta^2p^2\left(1+\gamma \alpha p^2\right)\right) \eta_{p q}=-i p q\left(1+\alpha p^2\right) f_{p q}- \beta p^2 \tilde{g}_{p q}.
$$
Therefore, based on $\Delta(p,q)=q^2(1+\alpha p^2)^2- \beta^2p^2(1+\gamma \alpha p^2)=0$, we have the compatibility condition for $ F=(\tilde{g}, f)$ in the form,
\begin{equation}
	\label{23}
	\begin{aligned}
		& \tilde{g}_{pq}+ sgn(q)i\sqrt{1+\gamma \alpha p^2} f_{pq}=0.
	\end{aligned}
\end{equation}
Hence, if there exists a pair of $(p,q) $ satisfying compatibility condition (\ref{23}), the system (\ref{19}) has solutions,
\begin{equation}
\label{24}
\begin{aligned}
	&\eta_{pq}=\frac{-sgn(q) i \sqrt{1+\gamma \alpha p^2} p f_{pq}}{q\left(1+\alpha p^2\right) \sqrt{1+\gamma\alpha p^2}+ \beta p}+C, \\
	&u_{pq}=\frac{pf_{pq}}{q\left(1+\alpha p^2\right) \sqrt{1+\gamma \alpha p^2}+ \beta p}-sgn(q) i C \sqrt{1+\gamma \alpha p^2}, 
\end{aligned}
\end{equation}
where $C$ is arbitrary in $\mathbb{C}$. 

It can be observed that the linearized operator $\mathcal{L}$ defined in ($\ref{18}$) has a kernel for $(p, q) =(0, 0)$ or for $(p, q)$ satisfying $\Delta(p, q) = 0$. The following lemmas provide detailed information on the values of  $(p, q)$ where a nontrivial kernel of the linearized operator $\mathcal{L}$ exists.
\begin{lem}
Given $\alpha, \beta>0$, $0<\gamma<1$, the set,
\begin{equation}
	\label{25}
\Sigma_{(\alpha, \beta,\gamma)}:=\left\{(p, q) \in \mathbb{N}^{2}, q\left(1+\alpha p^2\right)-\beta p  \sqrt{1+\gamma \alpha p^2}=0\right\},
\end{equation}
is either empty, or finite. When there exists $\left(p_0, q_0\right) \in \Sigma_{(\alpha, \beta, \gamma)}$, then $\left(p_0, q_0\right)$ is the only element of $\Sigma_{(\alpha, \beta,\gamma)}$ if the following two conditions are satisfied:

(i) $ \frac{q_0^2}{\left(\alpha^2 q_0^2-\gamma \alpha \beta^2\right) p_0^2}$ is not an integer;

(ii) $ \frac{\beta^2-2 \alpha q^2 \pm \sqrt{\left(2 \alpha q^2-\beta^2\right)^2-4\left(\alpha q^2-\gamma \alpha \beta^2\right) q^2}}{2\left(\alpha^2 q^2-\gamma \alpha \beta^2\right)}$ are not integers for $q=1,2, \ldots, L$ with $q \neq q_0$ and $L=[\frac{\beta \sqrt{1+\alpha}}{\alpha}]$.
\end{lem}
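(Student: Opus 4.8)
The plan is to reduce the transcendental relation defining $\Sigma_{(\alpha,\beta,\gamma)}$ to a single quadratic in $X=p^{2}$ and to extract both the finiteness and the uniqueness from elementary properties of that quadratic. Restricting to $p,q\geqslant 1$ (the trivial pair being treated separately), I would first square the defining equation $q(1+\alpha p^{2})=\beta p\sqrt{1+\gamma\alpha p^{2}}$, which is legitimate because both sides are nonnegative; this recovers exactly $\Delta(p,q)=0$, i.e.\ $q^{2}(1+\alpha p^{2})^{2}=\beta^{2}p^{2}(1+\gamma\alpha p^{2})$. Expanding and collecting powers of $X=p^{2}$ gives
\[
(\alpha^{2}q^{2}-\gamma\alpha\beta^{2})X^{2}+(2\alpha q^{2}-\beta^{2})X+q^{2}=0 .
\]
Thus for each fixed $q$ there are at most two admissible values of $p^{2}$, hence at most two values of $p$.

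For the finiteness statement I would bound $q$ using $0<\gamma<1$: from $q^{2}(1+\alpha p^{2})^{2}=\beta^{2}p^{2}(1+\gamma\alpha p^{2})<\beta^{2}p^{2}(1+\alpha p^{2})$ one obtains $q^{2}<\beta^{2}p^{2}/(1+\alpha p^{2})<\beta^{2}/\alpha$, so that $q<\beta/\sqrt{\alpha}\leqslant \beta\sqrt{1+\alpha}/\alpha$ and therefore $q\leqslant L$. Only finitely many values of $q$ occur, each contributing at most two values of $p$; hence $\Sigma_{(\alpha,\beta,\gamma)}$ is empty or finite.

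For the uniqueness statement, assume $(p_{0},q_{0})\in\Sigma_{(\alpha,\beta,\gamma)}$ and that (i) and (ii) hold. The guiding observation is that a pair $(p,q)$ can belong to $\Sigma_{(\alpha,\beta,\gamma)}$ only if $X=p^{2}$ is a root of the quadratic above, and being the square of a natural number it is in particular a positive integer; it therefore suffices to show that, for every $q\in\{1,\dots,L\}$, the quadratic has no integer root besides $p_{0}^{2}$. For $q=q_{0}$, the value $p_{0}^{2}$ is one root, and Vieta's product formula identifies the companion root as $q_{0}^{2}/\big((\alpha^{2}q_{0}^{2}-\gamma\alpha\beta^{2})p_{0}^{2}\big)$, which by hypothesis (i) is not an integer; hence $p_{0}$ is the only natural-number solution at $q=q_{0}$. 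For each remaining $q\in\{1,\dots,L\}\setminus\{q_{0}\}$, the two roots are precisely the expressions displayed in hypothesis (ii), and since these are not integers no natural-number $p$ can arise. Together with $q\leqslant L$, this shows $(p_{0},q_{0})$ is the unique element.

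I expect the finiteness and the structural reduction to be routine; the main care is needed in three places. First, pinning down the explicit constant $L$ cleanly from $0<\gamma<1$. Second, the logical point that a natural number $p$ forces $p^{2}$ to be an integer, so that excluding integer roots (conditions (i) and (ii)) is sufficient even though one never needs to test the stronger perfect-square property. Third, the degenerate configurations: when the leading coefficient $\alpha^{2}q^{2}-\gamma\alpha\beta^{2}$ vanishes the equation is linear with a single root, when the discriminant is negative there are no real roots and hence no contribution, and when $p_{0}^{2}$ is a double root there is nothing further to exclude. I anticipate this last bookkeeping to be the most delicate part, although each sub-case is elementary.
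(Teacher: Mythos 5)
Your argument is correct and follows essentially the same route as the paper's proof: bound $q$ by $L$, reduce the defining relation (after squaring) to the quadratic $(\alpha^{2}q^{2}-\gamma\alpha\beta^{2})X^{2}+(2\alpha q^{2}-\beta^{2})X+q^{2}=0$ in $X=p^{2}$, then exclude a second element via Vieta's product formula when $q=q_{0}$ (condition (i)) and via the explicit quadratic-formula roots when $q\neq q_{0}$ (condition (ii)). If anything your write-up is slightly more careful than the paper's, which uses a marginally looser bound on $q$, counts ``at most four'' values of $p$ where two suffice, and does not address the degenerate sub-cases (vanishing leading coefficient, negative discriminant, double root) that you flag.
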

\begin{proof}
	For $\alpha, \beta>0$, $0<\gamma<1$, $(p, q) \in \Sigma_{(\alpha, \beta,\gamma)}$ implies,
$$
q=\frac{\beta p\sqrt{1+\gamma\alpha  p^2}}{1+\alpha p^2}\leq \frac{\beta \sqrt{1+\gamma\alpha}}{\alpha} \leq \frac{\beta \sqrt{1+\alpha}}{\alpha}.
$$
Hence, the only possible values for $q$ are $q=1,2, \ldots,[\frac{\beta \sqrt{1+\alpha}}{\alpha}] $. Then for a fixed value of $q$, recalling from (\ref{20}), $p$ is the solution of the following equation,
\begin{equation}
	\label{26}
\left(\alpha^2 q^2-\gamma \alpha \beta^2\right) p^4+\left(2 \alpha q^2-\beta^2\right) p^2+q^2=0,
\end{equation}
where there are at most four possible values of $p$ such that (\ref{26}) holds. This finishes the proof that $\Sigma_{(\alpha, \beta,\gamma)}$ is either empty or finite. 
 
 Now, we assume the set $\Sigma_{(\alpha, \beta,\gamma)}$ is not empty, and suppose there exists two different paris $(p_0,q_0)$, $(p_1,q_1)$ $\in\Sigma_{(\alpha, \beta,\gamma)}$, where $p_0 \neq p_1$ or $q_0 \neq q_1$. We intend to show the uniqueness of element in  $\Sigma_{(\alpha, \beta,\gamma)}$ under conditions (i) and (ii) with contradictions.
\begin{itemize}
 	\item[$\bullet$]If $q_0 = q_1$, $p_0$ and $p_1$ are two roots of (\ref{26}), hence one has either $ p_0^2=p_1^2 $ or,
 	$$
 	p_0^2 \cdot p_1^2=\frac{q_0^2}{\alpha^2 q_0^2-r \alpha \beta^2}.
 	$$
Since $p_0, p_1\in\mathbb{N}$, for the case $ p_0^2=p_1^2 $, it leads to $p_0 = p_1$. And for the case,
$$
 p_1^2=\frac{q_0^2}{\left(\alpha^2 q_0^2-\gamma \alpha \beta^2\right) p_0^2},
$$
it is an integer which violates the assumptions.
\item[$\bullet$] If $q_0 \neq q_1$, one has,
$$
p^2=\frac{\beta^2-2 \alpha q_1^{2} \pm \sqrt{\left(2 \alpha q_1^{2}-\beta^2\right)^2-4\left(\alpha  q_1^{2}-\gamma \alpha \beta^2\right) q_1^{ 2}}}{2\left(\alpha^2 q_1^{2}-\gamma \alpha \beta^2\right)},
$$
and at least one of them is $p_1^{2}$ which is an integer, then this again contradicts with the assumptions.
\end{itemize}

The proof is now complete.
\end{proof}

We then study the norm of the operator $\mathcal{L}$.
\begin{lem}
\label{lem 2}
For $\alpha, \beta>0$, $0<\gamma<1$, there exists a constant $M>0$ (depending only on $(\alpha, \beta, \gamma))$ such that, for any pair $(p, q) \in \mathbb{N} \times \mathbb{Z}$, $(p,|q|) \notin \Sigma_{(\alpha, \beta,\gamma)}$ and $(p, q) \neq(0,0)$, we have
\begin{equation}
	\label{27}
	 C:=\frac{p\left(\beta p +\gamma \alpha \beta p^3 \right)+p|q|\left(1+\alpha p^2\right)}{\left|q^2\left(1+\alpha p^2\right)^2- \beta^2p^2\left(1+\gamma \alpha p^2\right)\right|} \leq M ,
\end{equation}
\begin{equation}
	\label{28}
	 D:=\frac{p|q|\left(1+\alpha p^2\right)+\beta p^2 }{\left|q^2\left(1+\alpha p^2\right)^2-\beta^2p^2 \left(1+\gamma \alpha p^2\right)\right|} \leq M .
\end{equation}
\end{lem}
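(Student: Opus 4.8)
The plan is to bound $C$ and $D$ by collapsing both of them onto a single scalar quantity: the distance between the integer $q$ and the \emph{resonant curve}
$$\phi(p):=\frac{\beta p\sqrt{1+\gamma\alpha p^2}}{1+\alpha p^2}.$$
Since $\Delta$ in \eqref{20} and the expressions \eqref{27}, \eqref{28} depend on $q$ only through $q^2$ and $|q|$, I may assume $q\ge 0$; the case $p=0$ with $q\neq 0$ is trivial, since the numerators of $C$ and $D$ carry a factor $p$, so $C=D=0$, and I restrict henceforth to $p\ge 1$. The central algebraic observation is the factorization
$$q^2(1+\alpha p^2)^2-\beta^2 p^2(1+\gamma\alpha p^2)=(1+\alpha p^2)\bigl(q-\phi(p)\bigr)\,S,\qquad S:=q(1+\alpha p^2)+\beta p\sqrt{1+\gamma\alpha p^2},$$
so that $|\Delta(p,q)|=(1+\alpha p^2)\,|q-\phi(p)|\,S$, where $S>0$ for $(p,q)\neq(0,0)$ and the hypothesis $(p,|q|)\notin\Sigma_{(\alpha,\beta,\gamma)}$ guarantees $q-\phi(p)\neq 0$.

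First I would cancel the factor $S$. Using $\sqrt{1+\gamma\alpha p^2}\ge 1$ one has $\beta p\le\beta p\sqrt{1+\gamma\alpha p^2}$, so the numerator of $D$ obeys $p\,q(1+\alpha p^2)+\beta p^2=p\bigl(q(1+\alpha p^2)+\beta p\bigr)\le pS$; dividing by $|\Delta|$ and using $1+\alpha p^2\ge 2\sqrt\alpha\,p$ gives
$$D\le\frac{p}{(1+\alpha p^2)\,|q-\phi(p)|}\le\frac{1}{2\sqrt\alpha}\cdot\frac{1}{|q-\phi(p)|}.$$
For $C$, whose numerator equals $p\bigl(\beta p(1+\gamma\alpha p^2)+q(1+\alpha p^2)\bigr)$, I would bound $\beta p(1+\gamma\alpha p^2)\le\sqrt{1+\gamma\alpha p^2}\,S$ and $q(1+\alpha p^2)\le S$, obtaining a numerator bound $p\bigl(\sqrt{1+\gamma\alpha p^2}+1\bigr)S$; dividing, and recalling $\tfrac{\beta p\sqrt{1+\gamma\alpha p^2}}{1+\alpha p^2}=\phi(p)$, yields
$$C\le\frac{\phi(p)/\beta+1/(2\sqrt\alpha)}{|q-\phi(p)|}.$$
Since the proof of the previous lemma already shows $\phi(p)\le\beta\sqrt{1+\alpha}/\alpha$, both inequalities reduce the lemma to producing a single constant $\delta>0$ with $|q-\phi(p)|\ge\delta$ for every admissible pair $(p,q)$.

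The last step is this uniform gap estimate, and it is where the real work lies. The curve $\phi$ is continuous and bounded, with $\phi(p)\to\phi_\infty:=\beta\sqrt{\gamma/\alpha}$ as $p\to\infty$. If $\phi_\infty$ sits at positive distance $2\rho$ from $\mathbb{Z}$, I would pick $p_0$ with $|\phi(p)-\phi_\infty|<\rho$ for $p\ge p_0$, so that $|q-\phi(p)|\ge\rho$ for all $p\ge p_0$ and all integers $q$. For the finitely many remaining $p$ with $1\le p<p_0$, only the nearest integer $q_p$ to $\phi(p)$ can make $|q-\phi(p)|<1/2$, and each of these finitely many exceptional pairs $(p,q_p)$ satisfies $|q_p-\phi(p)|>0$, since equality would place it in $\Sigma_{(\alpha,\beta,\gamma)}$, which is excluded. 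Choosing $\delta$ to be the minimum of $\rho$, $1/2$, and these finitely many positive numbers completes the argument.

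I expect the genuine obstacle to concentrate entirely at the tail $p\to\infty$: the integer lattice in $q$ can only interact badly with $\phi$ through its limit $\phi_\infty$. The expansion $\phi(p)=\phi_\infty+O(p^{-2})$ shows that if $\phi_\infty$ were itself an integer, then $|q-\phi(p)|$ would decay like $p^{-2}$ along the line $q=\phi_\infty$, making $(1+\alpha p^2)|q-\phi(p)|$ bounded and hence $D\sim p/|\Delta|$ unbounded — a genuine small-divisor degeneration rather than a failure of the method of proof. Thus the estimate, and with it the constant $M$, hinges on the arithmetic nature of $\beta\sqrt{\gamma/\alpha}$; it goes through cleanly precisely when this number stays away from $\mathbb{Z}$, and the quantitative control of $|q-\phi(p)|$ near the limiting value $\phi_\infty$ is the crux of the whole estimate.
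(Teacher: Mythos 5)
Your argument is correct and reaches the same conclusion, but by a genuinely different route from the paper. The paper divides through by $1+\alpha p^2$ and then splits on the size of $|q|$: for $|q|\geq 2\beta/\sqrt{\alpha}$ it bounds the denominator below directly by $\frac{4\beta^2}{\alpha}+3\beta^2p^2$, and for the finitely many remaining values of $q$ it fixes $q$, views the quotient as a sequence $\{d_p\}$ in $p$, and argues boundedness from the existence of the finite limit $\beta/|\alpha q^2-\gamma\beta^2|$, taking $M$ as a supremum over these finitely many sequences. You instead factor $\Delta(p,q)=(1+\alpha p^2)\bigl(q-\phi(p)\bigr)\bigl(q(1+\alpha p^2)+\beta p\sqrt{1+\gamma\alpha p^2}\bigr)$, cancel the second (positive) factor against the numerators, and reduce both (\ref{27}) and (\ref{28}) to a single uniform gap $|q-\phi(p)|\geq\delta$; this is more unified, makes the connection with the resonance set $\Sigma_{(\alpha,\beta,\gamma)}$ transparent, and replaces the paper's case split by one clean dichotomy (tail $p\geq p_0$ versus finitely many exceptional pairs). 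The paper's version is more elementary and avoids the square-root factorization, but handles $C$ and $D$ separately and leans on a compactness-style "bounded because convergent" step.

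One point deserves emphasis: both proofs need the arithmetic hypothesis that $\phi_\infty=\beta\sqrt{\gamma/\alpha}$ is not a positive integer, which is absent from the statement of the lemma. You make this explicit (your $2\rho>0$ requirement) and correctly diagnose that if $\phi_\infty\in\mathbb{Z}$ then $D$ is genuinely unbounded along $q=\phi_\infty$, since $\phi(p)-\phi_\infty=O(p^{-2})$; the paper buries the same assumption in the assertion that $q^2\neq\gamma\beta^2/\alpha$ "since $(+\infty,\sqrt{\gamma\beta^2/\alpha})\in\Sigma_{(\alpha,\beta,\gamma)}$", which is not literally covered by the definition (\ref{25}). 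So your proposal is not weaker than the paper's proof here; if anything it exposes a hypothesis that the lemma should state.
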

\begin{proof}
 Since $\alpha, \beta>0$, $0<\gamma<1$, and $p \in\mathbb{N}$, we have,
\begin{equation}
	\label{29}
C=\frac{\beta p^2(\frac{1+\gamma\alpha  p^2}{1+\alpha p^2})+p|q|}{\left|q^2\left(1+\alpha p^2\right)- \beta^2p^2 (\frac{1+\gamma\alpha p^2}{1+\alpha p^2})\right|} \leq \frac{\beta p^2+p|q|}{\left|q^2\left(1+\alpha p^2\right)- \beta^2 p^2(\frac{1+\gamma\alpha p^2}{1+\alpha p^2})\right|} .
\end{equation}
Now, let us first consider the pairs $(p, q)$ satisfying $|q| \geqslant 2 \beta / \sqrt{\alpha}$, then one has,
\begin{equation}
\label{30}
q^2\left(1+\alpha p^2\right)-\beta^2p^2  \frac{1+\gamma\alpha p^2}{1+\alpha p^2}\geqslant q^2\left(1+\alpha p^2\right)- \beta^2p^2 \geqslant \frac{4 \beta^2}{\alpha}+ 3\beta^2p^2>0.
\end{equation}
Thus, this leads to,
$$
C\leqslant \frac{ \beta p^2+p|q|}{q^2(1+\alpha p^ 2)- \beta^2p^2}=\frac{p(\beta p+|q|)}{(|q| \sqrt{1+\alpha p^2}+\beta p)(|q| \sqrt{1+\alpha p^2}-\beta p) }\leqslant\frac{p}{|q| \sqrt{1+\alpha p^2}-\beta p }.
$$
Recall from the condition $|q| \geqslant 2 \beta / \sqrt{\alpha}$, then one can obatin,
$$
C\leqslant\frac{p}{\frac {2\beta }{\sqrt{\alpha}}(\sqrt{1+\alpha p^2})-\beta p }\leqslant\frac{p}{2\beta p}=\frac{1}{2\beta}.
$$
Next, we consider the case for $|q| < 2 \beta / \sqrt{\alpha}$, where the only possible values for $|q|$ are  $0,1,\ldots,K$ with $K=[2\beta / \sqrt{\alpha}]$. For a fixed value of $q$, we can define the sequence of number $\left\{d_p\right\}$ following from (\ref{29}),
$$
{d_p}:=\frac{\beta p^2 +p|q|}{\left|q^2\left(1+\alpha p^2\right) -   {\beta}^2p^2\frac{1+\gamma\alpha p^2}{1+\alpha p^2}\right|},~ p\in\mathbb{N}.
$$
As $ p $ tends to $+\infty$, the limit of  $\left\{d_p\right\}$ exists, i.e.
$$
\lim _{p \rightarrow +\infty}{d_p}=\lim _{p \rightarrow +\infty} \frac{\beta p^2 +p|q|}{\left|q^2\left(1+\alpha p^2\right) -   {\beta}^2p^2\frac{1+\gamma\alpha p^2}{1+\alpha p^2}\right|}=\frac{\beta}{\left|\alpha q^2-\gamma\beta^2\right|},
$$
where $q^2\neq\frac{\gamma\beta^2}{\alpha}$ since $(+\infty,\sqrt{\frac{\gamma\beta^2}{\alpha}}) \in \Sigma_{(\alpha, \beta,\gamma)}$.
Therefore, according to boundedness of the existence of sequence, we set,
$$
M_1=\sup\left\{d_p\right\},~ p\in\mathbb{N},~ q=0,1,\ldots,K,
$$
and take $M=\max \left\{\frac{1}{2 \beta}, M_1\right\}$, the proof of (\ref{27}) is complete.
We can simplify $D$ as follows,
$$
	D =\frac{p|q|+\beta p^2 \frac{1}{1+\alpha p^2}}{\left|q^2\left(1+\alpha p^2\right)-\beta^2 p^2 (\frac{1+\gamma \alpha p^2}{1+\alpha p^2})\right|} 
	\leq \frac{p|q|+ \beta p^2}{\left|q^2\left(1+\alpha p^2\right)-\beta^2 p^2(\frac{1+\gamma \alpha p^2}{1+\alpha p^2})\right|},
$$
the proof of (\ref{28}) is then similar to the one for (\ref{27}). Therefore, omitted.
\end{proof}
In the following, let us focus on the situation where $(p_0, q_0)$ is the only solution in $\Sigma_{(\alpha_0, \beta_0,\gamma_0)}$. Then, based on the Lemma~\ref{lem 2}, we have the following straightforward results of  the linearized operator $\mathcal{L}_0:=\mathcal{L}_{\left(\alpha_0, \beta_0,\gamma_0\right)}$.
\begin{prop}
	\label{prop 2.1}
Assuming generally that $(\alpha_0, \beta_0, \gamma_0)$ is in the situation where $(p_0, q_0)$ is the only solution in ~$\Sigma_{(\alpha_0, \beta_0,\gamma_0)}$. For $ (p,|q|)\ne(p_0, q_0)$ and  $ (p,q)\ne(0, 0)$, we have,
\begin{equation}
	\label{31}
	\begin{aligned}
	\left|\eta_{p q}\right|+\left|u_{p q}\right| \leqslant M(|f_{pq}|+|\tilde{g}_{p q}|),
	\end{aligned}
\end{equation}
which can give estimates for $\left(\eta_{p q}, u_{p q}\right)$ in terms of $\left(f_{p q}, \tilde{g}_{p q}\right)$. 
	\end{prop}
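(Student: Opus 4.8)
The plan is to obtain (\ref{31}) directly from the explicit solution formulas (\ref{21}) and (\ref{22}), using Lemma~\ref{lem 2} as the sole quantitative input. Before invoking those formulas I would first check that they are legitimate on the prescribed index set, i.e. that $\Delta(p,q)\ne0$ there. This follows from the factorization
\begin{equation*}
\Delta(p,q)=\bigl(|q|(1+\alpha p^2)-\beta p\sqrt{1+\gamma\alpha p^2}\bigr)\bigl(|q|(1+\alpha p^2)+\beta p\sqrt{1+\gamma\alpha p^2}\bigr),
\end{equation*}
whose first factor vanishes precisely when $(p,|q|)\in\Sigma_{(\alpha_0,\beta_0,\gamma_0)}$ and whose second factor vanishes only when $(p,q)=(0,0)$. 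Since $(p_0,q_0)$ is assumed to be the unique element of $\Sigma_{(\alpha_0,\beta_0,\gamma_0)}$, the two exclusions $(p,|q|)\ne(p_0,q_0)$ and $(p,q)\ne(0,0)$ guarantee $\Delta(p,q)\ne0$; moreover these are exactly the hypotheses under which Lemma~\ref{lem 2} yields $C\le M$ and $D\le M$.

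Next I would estimate the two components separately. Applying the triangle inequality to (\ref{21}) and to (\ref{22}) gives
\begin{equation*}
|\eta_{pq}|\le\frac{p|q|(1+\alpha p^2)}{|\Delta(p,q)|}|f_{pq}|+\frac{\beta p^2}{|\Delta(p,q)|}|\tilde{g}_{pq}|,
\end{equation*}
\begin{equation*}
|u_{pq}|\le\frac{p(\beta p+\gamma\alpha\beta p^3)}{|\Delta(p,q)|}|f_{pq}|+\frac{p|q|(1+\alpha p^2)}{|\Delta(p,q)|}|\tilde{g}_{pq}|.
\end{equation*}
The key observation is that each of the four coefficients above is a single summand of the numerator of $C$ or of $D$, divided by $|\Delta(p,q)|$; hence the two coefficients in the bound for $|\eta_{pq}|$ are each $\le D\le M$, and the two coefficients in the bound for $|u_{pq}|$ are each $\le C\le M$. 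This produces $|\eta_{pq}|\le M(|f_{pq}|+|\tilde{g}_{pq}|)$ and $|u_{pq}|\le M(|f_{pq}|+|\tilde{g}_{pq}|)$.

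Adding these two estimates then yields $|\eta_{pq}|+|u_{pq}|\le 2M(|f_{pq}|+|\tilde{g}_{pq}|)$, and one obtains (\ref{31}) after renaming the constant (replacing $2M$ by $M$). I do not expect any genuine difficulty: the whole analytic content, namely the uniform control of the Fourier multipliers away from the resonant couple, is already encoded in Lemma~\ref{lem 2}, so this proposition is only a repackaging of those bounds. The only points demanding attention are the verification that $\Delta(p,q)\ne0$ off the kernel, and the bookkeeping that matches each numerator term to the correct quantity $C$ or $D$; both are elementary.
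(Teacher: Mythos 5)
Your proposal is correct and follows exactly the route the paper intends: the paper states Proposition~\ref{prop 2.1} as a ``straightforward result'' of Lemma~\ref{lem 2} without writing out the details, and your derivation (checking $\Delta(p,q)\ne0$ via the difference-of-squares factorization, then matching each coefficient in (\ref{21})--(\ref{22}) to a summand of the numerators of $C$ and $D$) is precisely the omitted bookkeeping. The only cosmetic difference is the constant $2M$ versus $M$, which is harmless since $M$ is generic.
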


Now, we study the kernels of $\mathcal{L}_0$ in $H_{\natural\natural, 0}^{k, e}\times H_{\natural\natural}^{k, o}$. We can directly obtain the trivial kernel $\zeta_0=(1,0)$ of the linearized operator
$\mathcal{L}_0$, for the case when $(p,q)=(0,0)$. It remains to study the nontrivial kernel of the linearized operator
$\mathcal{L}_0$, corresponding to the case when $(p,|q|)=(p_0,q_0)$. The eigenvectors are defined as follows,
$$
\begin{aligned}
	&\xi_0:=\xi_{p_0,q_0}=\left(e^{i q_0 t} \cos p_0 x,-i \sqrt{1+\gamma_0 \alpha_0 p_0{ }^2} e^{iq_0t} \sin p_0 x\right),\\
	&\bar{\xi}_0:=\xi_{p_0,-q_0}=\left(e^{-i q_0 t} \cos p_0 x,i \sqrt{1+\gamma_0 \alpha_0 p_0{ }^2} e^{-iq_0t} \sin p_0 x\right),
\end{aligned}
$$
and it then leads to,
$$
\mathcal{L}_0 \xi_0=\mathcal{L}_0 \bar{\xi}_0=0 .
$$
With the Hermitian scalar product in $(L_{\natural\natural}^{2})^2$, the compatibility condition (\ref{23}) reads,
$$
\left\langle F, \xi_0\right\rangle=\left\langle F, \bar{\xi}_0\right\rangle=0,
$$
where $F=(\tilde{g}, f)\in H_{\natural\natural}^{k, e} \times H_{\natural\natural}^{k, o}$. Following the kernel defined above, we let $U=(\eta,u)$ is orthogonal to $\zeta_0$, $\xi_0$ and  $\bar{\xi}_0$ in $(L_{\natural\natural}^{2})^2$, where $U=(\eta,u)$ $\in$  $H_{\natural\natural, 0}^{k, e} \times H_{\natural\natural}^{k, o}$. Therefore, it ensures the  uniqueness of solution for $\mathcal{L}_0{U}=F$. Recall from (\ref{24}) and we have, 
\begin{equation}
	\label{32}
\begin{aligned}
	&\eta_{00}=0,\\
	& \eta_{p_0, \pm q_0}=\frac{\mp i \sqrt{1+\gamma_0 \alpha_0 p_0^2} p_0 f_{p_0, \pm q_0}}{q_0\left(1+\alpha_0 p_0^2\right) \sqrt{1+\gamma_0 \alpha_0 p_0^2}+\beta_0p_0 }, \\
	& u_{p_0, \pm q_0}=\frac{p_0 f_{p_0, \pm q_0}}{q_0\left(1+\alpha_0 p_0^2\right) \sqrt{1+\gamma_0 \alpha_0 p_0^2}+\beta_0p_0 }.
\end{aligned}
\end{equation}
Then, based on Proposition $\ref{prop 2.1}$, we are now able to define an bounded operator $\widetilde{\mathcal{L}_0}^{-1},$
$$
U=\widetilde{\mathcal{L}_0}^{-1}F,
$$
acting on $H_{\natural\natural}^{k, e} \times H_{\natural\natural}^{k, o}$, $k\geqslant0$, solving $\mathcal{L}_0 U=F$,
\begin{itemize}
\item[$(i)$] for $\Delta \neq 0, p > 0$ and $q \in \mathbb{Z}$,
$$
U_{p q}=(\eta_{pq},u_{pq})=\widetilde{\mathcal{L}_0}^{-1}F_{p q},
$$
with $\eta_{p q}$ and $u_{p q}$ given by (\ref{21}) and (\ref{22}), that is,
$$
	\begin{aligned}
		\eta_{p q}=-\Delta^{-1} p\left[i q\left(1+\alpha p^2\right) f_{p q}+ \beta p\tilde{g}_{p q}\right], \quad\quad\quad\quad\quad
	\end{aligned}
$$
$$
	\begin{aligned}
		u_{p q}=-\Delta^{-1} p\left[\left( \beta p+\gamma  \alpha \beta p^3\right) f_{p q }-i q\left(1+\alpha p^2\right) \tilde{g}_{p q}\right];
	\end{aligned}
$$
\item[$(ii)$] for $ (p,q)=(p_0, \pm q_0)$,
$$
U_{p_0,\pm q_0}=(\eta_{p_0,\pm q_0},u_{p_0,\pm q_0}),
$$
with $\eta_{p_0,\pm q_0}$ and $u_{p_0,\pm q_0}$ given by (\ref{32});
\item[$(iii)$] for $ (p,q)=(0,0)$,
$$
U_{0,0}=(0,0).
$$
\end{itemize} 

Let us introduce the $O(2)$ group invariance, corresponding to the invariance of the system under reflection $x \rightarrow -x$ and shifts in $t$. The symmetry operator is defined by,
$$
\{\mathcal{S}(\eta, u)\}(x, t)=(\eta(-x, t),-u(-x, t)),
$$
and for any $\tau$ real,  the linear operator $\mathcal{T}_\tau$ representing the shifts is defined by, 
$$
\left\{\mathcal{T}_\tau(\eta, u)\right\}(x, t)=(\eta(x, t+\tau), u(x, t+\tau)).
$$
One can obtain,
$$
\mathcal{S} \mathcal{T}_{-\tau}=\mathcal{T}_\tau \mathcal{S},\quad \mathcal{T}_{2 \pi }=\mathbb{I}.
$$
Hence the $O(2)$ symmetry is satisfied. Then, we have the following statements.
\begin{lem}
	\label{lem 2.3}
Assuming generally that $(\alpha_0, \beta_0, \gamma_0)$ is in the situation where $(p_0, q_0)$ is the only solution in $\Sigma_{(\alpha_0, \beta_0,\gamma_0)}$. Then, for any given,
$$
F=(\tilde{g},f)=(\left(\mathbb{I}-\pi_0\right) g,f)\in H_{\natural\natural}^{k, e} \times H_{\natural\natural}^{k, o},  k\geqslant0,
$$
satisfying the following compatibility conditions,
$$
\left\langle F, \xi_0\right\rangle=\left\langle F, \bar{\xi}_0\right\rangle=0,
$$
the general solution $U=(\eta, u) \in H_{\natural\natural, 0}^{k, e} \times H_{\natural\natural}^{k, o},  k\geqslant0$, of the system,
$$
\mathcal{L}_{_0} U=F,
$$	
is given by,
\begin{equation}
U=\widetilde{\mathcal{L}_0}^{-1}F+A \xi_0+\bar{A} \bar{\xi}_0+B \zeta_0,
\end{equation}
where,
$$
\zeta_0=(1,0), \quad \xi_0=\left(e^{i q_0 t} \cos p_0 x,-i \sqrt{1+\gamma_0 \alpha_0 p_0^2} e^{i q_0 t} \sin p_0 x\right), 
$$
A $\in \mathbb{C}$, $B\in \mathbb{R}$, and $\widetilde{\mathcal{L}_0}^{-1}$ is the bounded linear operator defined above. Moreover, we have the following properties,
\begin{equation}
\begin{aligned}
	\label{34}
\mathcal{S} \mathcal{L}_0 =\mathcal{L}_0 \mathcal{S},~\mathcal{S}\xi_0=\bar{\xi}_0,~\mathcal{S} \zeta_0=\zeta_0,~~~~\\
 \mathcal{T}_\tau \mathcal{L}_0 =\mathcal{L}_0\mathcal{T}_\tau ,\mathcal{T}_\tau \xi_0=\mathrm{e}^{\mathrm{i} q_0 \tau} \xi_0, \mathcal{T}_\tau \zeta_0=\zeta_0.
	\end{aligned}
\end{equation}
\end{lem}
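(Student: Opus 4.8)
The plan is to solve $\mathcal{L}_0 U=F$ mode by mode in the Fourier representation (\ref{47}). Under the standing hypothesis that $(p_0,q_0)$ is the unique element of $\Sigma_{(\alpha_0,\beta_0,\gamma_0)}$, every pair $(p,q)\in\mathbb{N}\times\mathbb{Z}$ belongs to exactly one of three classes: the noncritical modes with $\Delta(p,q)\neq0$ and $(p,q)\neq(0,0)$; the single mode $(p,q)=(0,0)$; and the critical modes $(p,|q|)=(p_0,q_0)$. First I would dispatch the noncritical modes: there the reduced system (\ref{19}) is uniquely solvable and its solution is exactly (\ref{21})--(\ref{22}), so assembling these Fourier coefficients reproduces the operator $\widetilde{\mathcal{L}_0}^{-1}$ constructed just before the lemma, whose boundedness on $H_{\natural\natural}^{k,e}\times H_{\natural\natural}^{k,o}$ is furnished by Proposition~\ref{prop 2.1} (equivalently Lemma~\ref{lem 2}).

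Next I would treat the critical modes and identify the solvability conditions with the stated orthogonality. At $(p_0,\pm q_0)$ the reduced system (\ref{19}) is solvable precisely when the scalar compatibility condition (\ref{23}) holds, and the key computation is to show that, with the Hermitian product on $(L_{\natural\natural}^2)^2$, one has $\langle F,\xi_0\rangle$ proportional to $\tilde g_{p_0 q_0}+\mathrm{i}\sqrt{1+\gamma_0\alpha_0 p_0^2}\,f_{p_0 q_0}$ and $\langle F,\bar\xi_0\rangle$ proportional to the conjugate expression at $-q_0$; this is a short orthogonality calculation using the $\cos$/$\sin$ parities and the factor $-\mathrm{i}\sqrt{1+\gamma_0\alpha_0 p_0^2}$ in $\xi_0$. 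Hence the hypotheses $\langle F,\xi_0\rangle=\langle F,\bar\xi_0\rangle=0$ are exactly (\ref{23}) at the two critical modes, and under them (\ref{24}) gives a solution carrying one free constant $C\in\mathbb{C}$ per mode. Taking $C=0$ returns the particular values (\ref{32}) already built into $\widetilde{\mathcal{L}_0}^{-1}F$, which confirms that $\widetilde{\mathcal{L}_0}^{-1}F$ solves $\mathcal{L}_0 U=F$.

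To obtain the general solution I would then compute $\ker\mathcal{L}_0$ by setting $F=0$: noncritical modes force $U_{pq}=0$; the mode $(0,0)$ leaves $\eta_{00}$ free with $u_{00}=0$, giving the real multiple $B\zeta_0$, $\zeta_0=(1,0)$; and the critical modes leave the free constants at $(p_0,q_0)$ and $(p_0,-q_0)$. The decisive bookkeeping is reality of $(\eta,u)$, which imposes $\eta_{p_0,-q_0}=\overline{\eta_{p_0,q_0}}$ and $u_{p_0,-q_0}=\overline{u_{p_0,q_0}}$; comparing with (\ref{24}), this ties the two free constants into a conjugate pair and writes the homogeneous part on those modes as $A\xi_0+\bar A\bar\xi_0$ with $A\in\mathbb{C}$, while forcing $B\in\mathbb{R}$. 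Superposing the particular solution with the kernel then yields the claimed formula $U=\widetilde{\mathcal{L}_0}^{-1}F+A\xi_0+\bar A\bar\xi_0+B\zeta_0$, each summand lying in $H_{\natural\natural,0}^{k,e}\times H_{\natural\natural}^{k,o}$.

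Finally I would verify the equivariance relations (\ref{34}) by direct substitution. The commutations $\mathcal{T}_\tau\mathcal{L}_0=\mathcal{L}_0\mathcal{T}_\tau$ and $\mathcal{S}\mathcal{L}_0=\mathcal{L}_0\mathcal{S}$ follow because the coefficients in (\ref{17}) are $t$-independent and because $x\mapsto -x$ together with the sign change on the second component sends each odd $x$-derivative to its negative while matching the parities of the two entries, so every term of $\mathcal{L}_0$ is preserved; the identities $\mathcal{S}\zeta_0=\zeta_0$, $\mathcal{T}_\tau\zeta_0=\zeta_0$ and $\mathcal{T}_\tau\xi_0=\mathrm{e}^{\mathrm{i}q_0\tau}\xi_0$ are immediate from $\zeta_0=(1,0)$ and the explicit $\mathrm{e}^{\mathrm{i}q_0 t}$ dependence of $\xi_0$. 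I expect the one genuinely delicate point to be $\mathcal{S}\xi_0=\bar\xi_0$: it requires the reflection to exchange the $+q_0$ and $-q_0$ eigenspaces, so I would check the action of $\mathcal{S}$ on $\xi_0$ against the $O(2)$ relation $\mathcal{S}\mathcal{T}_{-\tau}=\mathcal{T}_\tau\mathcal{S}$ to ensure the time/parity conventions defining $\mathcal{S}$ are fixed consistently. This relation, rather than the soft assembly of the solution formula, is where the care is needed, the remainder of the argument being bookkeeping on top of Proposition~\ref{prop 2.1} and Lemma~\ref{lem 2}.
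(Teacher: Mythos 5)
Your proposal is correct and follows essentially the same route as the paper, which presents no separate proof but lets the lemma summarize the preceding construction: mode-by-mode solution via (\ref{21})--(\ref{22}), identification of the compatibility condition (\ref{23}) with orthogonality to $\xi_0,\bar{\xi}_0$, the kernel $\{\zeta_0,\xi_0,\bar{\xi}_0\}$ with reality tying the critical-mode constants into $A,\bar{A}$, and boundedness of $\widetilde{\mathcal{L}_0}^{-1}$ from Lemma~\ref{lem 2}. Your caution about $\mathcal{S}\xi_0=\bar{\xi}_0$ is well placed: as printed, $\mathcal{S}(\eta,u)(x,t)=(\eta(-x,t),-u(-x,t))$ acts as the identity on $H_{\natural\natural}^{k,e}\times H_{\natural\natural}^{k,o}$ and would give $\mathcal{S}\xi_0=\xi_0$, so the reversor must also reverse $t$ (e.g.\ $(\eta,u)(x,t)\mapsto(\eta(-x,-t),u(-x,-t))$) for (\ref{34}) and $\mathcal{S}\mathcal{T}_{-\tau}=\mathcal{T}_\tau\mathcal{S}$ to hold.
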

\section{Bifurcating solutions of Bona-Smith system}
Let us consider system ($\ref{15}$) for parameter values $(\alpha, \beta,\gamma)=\left(\alpha_0+\mu, \beta_0+\nu,\gamma_0\right)$, we then can rewrite the system into following form,
$$
\left\{\begin{array}{l}
	u_t+\beta_0 \eta_x-\gamma_0 \alpha_0 \beta_0 \eta_{x x x}-\alpha_0 u_{x x t}=-\Big(     (\beta_0+\nu) \frac{u^2}{2}+\nu \eta-\big((\alpha_0+\mu)(\beta_0+\nu)\gamma_0-\alpha_0\beta_0\gamma_0\big) \eta_{x x}-\mu u_{xt}\Big)_x, \\
	\eta_t+\beta_0 u_x-\alpha_0 \eta_{x x t}=- \Big((\beta_0+\nu)u\eta+\nu u-\mu \eta_{x t}\Big)_x,
\end{array}\right.
$$
where $\left(\alpha_0, \beta_0, \gamma_0\right)$ is as in the above Theorem $\ref{lem 2.3}$ with $(\mu, \nu)$ close to 0. And let us look for nontrivial doubly periodic solutions $(\eta, u)$ in $H_{\natural\natural, 0}^{k, e} \times H_{\natural\natural}^{k, o}$. We write the full system as,
$$
\mathcal{L}_0 U=F,
$$
where $F=(\tilde{g}, f)=\left(\left(\mathbb{I}-\pi_0\right) g, f\right)$, and,
$$
\begin{aligned}
	&g=-\Big((\beta_0+\nu) \frac{u^2}{2}+\nu \eta-\big((\alpha_0+\mu)(\beta_0+\nu)\gamma_0-\alpha_0\beta_0\gamma_0\big) \eta_{x x}-\mu u_{xt}\Big),\\
	&f=-\Big((\beta_0+\nu)u\eta+\nu u-\mu \eta_{x t}\Big).
\end{aligned}
$$
For $k \geqslant 2$, using Sobolev imbedding theorem,  we can observe that,
$$
\left(u^2 / 2, u \eta\right) \in H_{\natural\natural}^{k, e} \times H_{\natural\natural}^{k, o}.
$$
Then $F=(\tilde{g}, f)$ has properties required in Theorem $\ref{lem 2.3}$, once compatibility conditions are satisfied. Now we can apply the Lyapunov-Schmidt method for finding the bifurcation equation.

Let $U=(\eta, u) \in H_{\natural\natural, 0}^{k, e} \times H_{\natural\natural}^{k, o}$, and rewrite the system as,
\begin{equation}
	\label{35}
\mathcal{L}_0 U+\nu \mathcal{J}U-\mu\mathcal{K} U_{x t}-((\alpha_0+\mu)(\beta_0+\nu)\gamma_0-\alpha_0\beta_0\gamma_0) \mathcal{G}U+\left(\beta_0+\nu\right) \mathcal{N}(U, U)=0,
\end{equation}
where,
$$
\mathcal{J} U=\left(\left(\mathbb{I}-\pi_0\right) \eta, u\right), \quad \mathcal{K} U_{x t}=\left(u_{x t}, \eta_{x t}\right), \quad \mathcal{G} U=(\eta_{x x}, 0),
$$
with the symmetric bilinear operator defined as follows, 
$$
2 \mathcal{N}\left(U_1, U_2\right)=\left(\left(\mathbb{I}-\pi_0\right) u_1 u_2, u_1 \eta_2+u_2 \eta_1\right),
$$
for $U_i=(\eta_i,u_i)$, $i=1,2$. We can obtain that ($\ref{35}$) is equivariant under the $O(2)$ symmetry defined above,
\begin{equation}
	\label{36}
	\begin{aligned}
		& \mathcal{T}_\tau \mathcal{J}=\mathcal{J} \mathcal{T}_\tau, \quad \mathcal{T}_\tau \mathcal{K}=\mathcal{K} \mathcal{T}_\tau,\quad \mathcal{T}_\tau \mathcal{G}=\mathcal{G} \mathcal{T}_\tau,\quad \mathcal{T}_\tau \mathcal{N}=\mathcal{N} \mathcal{T}_\tau, \\
		& \mathcal{S} \mathcal{J}=\mathcal{J} \mathcal{S},  \quad \mathcal{S} \mathcal{K}=\mathcal{K} \mathcal{S}, \quad \mathcal{S} \mathcal{G}=\mathcal{G} \mathcal{S},\quad \mathcal{S} \mathcal{N}=\mathcal{N} \mathcal{S} .
	\end{aligned}
\end{equation}
Then we decompose $U\in H_{\natural\natural, 0}^{k, e} \times H_{\natural\natural}^{k, o}$ as follows,
$$
U=(\eta,u)=\Theta+V,
$$
with,
$$ \Theta=A \xi_0+\bar{A} \bar{\xi}_0+B \zeta_0, \quad \left\langle V, \xi_0\right\rangle=\left\langle V, \bar{\xi}_0\right\rangle=\left\langle V, \zeta_0\right\rangle=0 ,
$$
where the inner product is the one of $(L_{\natural\natural}^2)^2, A \in \mathbb{C}$ and $B \in \mathbb{R}$ are constants. And as a consequence of the above decomposition, we have,
$$
	\left\langle U, \zeta_0\right\rangle=\left\langle\Theta+V, \zeta_0\right\rangle=\left\langle \Theta, \zeta_0\right\rangle 
	= 4 \pi^2B.
$$
Recall from $U=(\eta, u) \in H_{\natural\natural, 0}^{k, e} \times H_{\natural\natural}^{k, o}$, we can obtain, 
$$
\begin{aligned}
	\left\langle U, \zeta_0\right\rangle=\langle(\eta, u),(1,0)\rangle = \int_{-\pi}^\pi \int_{-\pi}^\pi \eta(x, t)~ \mathrm{d} x \mathrm{~d} t,
\end{aligned}
$$
hence $B$ is the average of $\eta(x, t)$, that is,
$$
B=\frac{1}{4 \pi^2} \int_{-\pi}^\pi \int_{-\pi}^\pi \eta(x, t)~ \mathrm{d} x \mathrm{~d} t .
$$
For $F=(\tilde{g}, f)\in H_{\natural\natural}^{k, e} \times H_{\natural\natural}^{k, o},$ let us define a projection $\mathcal{Q}_0$ as follows,
$$
	\mathcal{Q}_0 F =F - \frac{\left\langle F, \xi_0\right\rangle}{\left\langle \xi_0, \xi_0\right\rangle} \xi_0-\frac{\left\langle F, \bar{\xi}_0\right\rangle}{\left\langle\bar{\xi}_0, \bar{\xi}_0\right\rangle} \bar{\xi}_0,
$$
and we notice that  $\mathcal{Q}_0 F=F$ is a necessary condition for $F$ to belong to the range of $\mathcal{L}$. We observe that,
$$
\mathcal{J} \xi_0=\xi_0, \quad \mathcal{J} \zeta_0=0,
$$
hence $\mathcal{Q}_0 \mathcal{J} \Theta=0$.
Substituting $U$ in ($\ref{35}$) and applying the projection $\mathcal{Q}_0$  to (\ref{35}), we can obtain,
\begin{equation}
	\label{37}
\mathcal{L}_0 V+\nu\mathcal{Q}_0 \mathcal{J}V-\mu\mathcal{Q}_0 \mathcal{K} U_{x t}-((\alpha_0+\mu)(\beta_0+\nu)\gamma_0-\alpha_0\beta_0\gamma_0) \mathcal{G}U+\left(\beta_0+\nu\right)\mathcal{Q}_0  \mathcal{N}(U, U)=0,
\end{equation}
along with compatibility conditions,
\begin{equation}
	\label{38}
	\left\langle\nu\mathcal{J}V-\mu\mathcal{K} U_{x t}-((\alpha_0+\mu)(\beta_0+\nu)\gamma_0-\alpha_0\beta_0\gamma_0) \mathcal{G}U+\left(\beta_0+\nu\right)\mathcal{N}(U, U), \xi_0\right\rangle=0,
\end{equation}
\begin{equation}
	\label{39}
	\left\langle\underline{}\nu\mathcal{J}V-\mu\mathcal{K} U_{x t}-((\alpha_0+\mu)(\beta_0+\nu)\gamma_0-\alpha_0\beta_0\gamma_0) \mathcal{G}U+\left(\beta_0+\nu\right)\mathcal{N}(U, U), \bar{\xi}_0\right\rangle=0.
\end{equation}
Here, we can use the Lyapunov-Schmidt method to seek for a ``solution" from the ``projection"  ($\ref {37}$) and compatibility conditions ($\ref{38}$)-($\ref{39}$) will lead to relations between $A, B$ and $\mu,\nu$. For more details
of the Lyapunov-Schmidt method, the reader may refer to \cite{haragus2011local}.

From ($\ref{37}$), one has,
$$
V+{\widetilde{\mathcal{L}_0}}^{-1}\left\{\nu\mathcal{Q}_0 \mathcal{J}V-\mu\mathcal{Q}_0 \mathcal{K} U_{x t}-((\alpha_0+\mu)(\beta_0+\nu)\gamma_0-\alpha_0\beta_0\gamma_0)\mathcal{Q}_0  \mathcal{G}U+\left(\beta_0+\nu\right)\mathcal{Q}_0  \mathcal{N}(U, U)\right\}=0,
$$
which is of the form,
$$
\mathcal{F}(V, A, \bar{A},B, \mu, \nu)=0,
$$
and thanks to the boundedness properties of the operator $\widetilde{\mathcal{L}}_0^{-1}$, $\mathcal{F}$ is analytic:
$$
\left\{\left(H_{{\natural\natural}, 0}^{k, e} \times H_{\natural\natural}^{k, o}\right) \cap\left\{\xi_0, \zeta_0, \overline{\zeta_0}\right\}^{\perp}\right\} \times \mathbb{C}^2 \times \mathbb{R}^3 \rightarrow\left\{\left(H_{\natural\natural, 0}^{k, e} \times H_{\natural\natural}^{k, o}\right) \cap\left\{\xi_0, \zeta_0, \overline{\zeta_0}\right\}^{\perp}\right\}.
$$
Hence, equation ($\ref{37}$) may be solved in $ H_{\natural\natural, 0}^{k, e} \times H_{\natural\natural}^{k, o}$ with respect to $V$ by the implicit function theorem,
for $A, B, \mu, \nu$ close enough to 0 in $\mathbb{C} \times \mathbb{R}^4$. We shall uniquely obtain, 
$$
V=\mathcal{Y}( A, \bar{A}, B, \mu, \nu).
$$
Moreover because of the fact in (\ref{34}) that,
$$
\mathcal{S} \xi_0=\bar{\xi}_0,~ \mathcal{T}_\tau \xi_0=\mathrm{e}^{\mathrm{i}q_0 \tau} \xi_0,~
\mathcal{S} \zeta_0= \mathcal{T}_\tau \zeta_0=\zeta_0,
$$
and invariance properties shown in (\ref{36}), one has following symmetry properties, 
$$
\begin{aligned}
	& \mathcal{T}_\tau \mathcal{Y}(A,\bar{A},  B, \mu,\nu)=\mathcal{Y}\left(A \mathrm{e}^{\mathrm{i} q_0\tau}, \bar{A} \mathrm{e}^{-\mathrm{i} q_0 \tau},B ,\mu,\nu\right), \\
	& S \mathcal{Y}( A,\bar{A}, B ,\mu,\nu)=\mathcal{Y}(\bar{A}, A, B ,\mu,\nu).
\end{aligned}
$$
Due to above symmetry properties, we notice that if $A \equiv 0$, then $\Theta$ and $V$ are invariant under shifts $\mathcal{T}_\tau$. It results the existence of the family of ``trivial solutions" of ($\ref{35}$),  corresponding to $U=U^{(0)}=B \zeta_0$. 

Next, we intend to give the principal part of  $V$. For this purpose, we rewrite $V$  as the sum of terms with different orders,
$$
V=\textstyle\sum_n V_{\{|\vec{a}|=n\}},
$$
where $|\vec{a}|$ is the $l^1$-norm of $\vec{a}=(a_1, a_2,\cdots,a_5)$ for $a_k\in \mathbb{N}_0,~k=1,2, \cdots,5$.  $V_{\{|\vec{a}|=n\}}$ represents all $n$-th order terms in $V$, with 
$$
 V_{\{|\vec{a}|=n\}}=\textstyle\sum_i {A}^{a_1^{(i)}}{\bar{A}}^{a_2^{(i)}} B^{a_3^{(i)}} \mu^{a_4^{(i)}} \nu^{a_5^{(i)}} \vec{v}_i(x, t),~~\mbox{for } |\vec{a}^{(i)}|=n,~i\in \mathbb{N},
$$
where $\vec{v}_i(x,t)\in H_{{\natural\natural}}^{k}$.
It can be checked that there are no 0-th order or 1-th order terms in $V$. Hence, the principal part of $V$ shows that,
\begin{equation}
	\label{40}
\begin{aligned}
V(A, \bar{A}, B, \mu, \nu)&=V_2(A, \bar{A}, B, \mu, \nu )+\mathrm{O}\left\{(|A|+|B|+|\mu|+|\nu|)^3\right\},
\end{aligned}
\end{equation}
where $V_2$ represents terms of 2-th order and writes as,
$$
V_2=\mu\tilde{L}_0^{-1} \mathcal{Q}_0 \mathcal{K}{\Theta}_{x t}+\left(\gamma_0 \beta_0\mu+\gamma_0 \alpha_0 \nu\right) \tilde{L}_0^{-1}\mathcal{Q}_0 \mathcal{G}{\Theta}-\beta_0 \tilde{L}_0^{-1} \mathcal{Q}_0 \mathcal{N}(\Theta, \Theta).
$$
By calculations, we have,
$$
\begin{aligned}
	&\mathcal{K}{\Theta}_{x t}= p_0 q_0\left(A \sqrt{1+\gamma_0 \alpha_0 p_0^2} e^{i q_0t} \cos p_0 x+\bar{A} \sqrt{1+r_0 \alpha_0 p_0^2} e^{-i q_0 t} \cos p_0 x,-i A e^{i q_0 t} \sin p_0 x+i \bar{A} e^{-i q_0 t} \sin p_0 x\right), \\
	&\mathcal{G}{\Theta}=-p_0^2\left(A e^{i q_0 t} \cos p_0 x+\bar{A} e^{-i q_{0} t} \cos p_0 x, 0\right), \\
	&\mathcal{N}(\Theta, \Theta)=\left(\frac{1}{4}\left(1+\gamma_0 \alpha_0 p_0^2\right) \cos 2 p_0 x\left(A^2 e^{2iq_0t}-2|A|^2+\bar{A}^2 e^{-2 i q_ 0 t}\right)\right. \text {, } \\
	& \qquad\quad\quad\quad\left.-i \sqrt{1+\gamma_0 \alpha_0 p_0^2} \sin p_0 x\left(A B e^{i q_0 t}-A B e^{-i q_0  t}\right)-\frac{i}{2} \sqrt{1+\gamma_0 \alpha_0 p_0^2} \sin 2 p_0 x\left(A^2 e^{2 i q_0 t}-\bar{A}^2 e^{-2 i q_0 t}\right)\right), 
\end{aligned}
$$
and
$$
\begin{aligned}
	&\mathcal{Q}_0 \mathcal{K}{\Theta}_{x t}= \frac{r_0 \alpha_0 p_0^3 q_0}{2+\gamma_0 \alpha_0 p_0^2}\left( \sqrt{1+\gamma_0 \alpha_0 p_0^2} \cos p_0 x\left(A e^{i q_0 t}+\bar{A} e^{-i q_0 t}\right),i \sin p_0 x\left(A e^{i q_0 t}-\bar{A} e^{-i q_0 t}\right)\right),\\
	&\mathcal{Q}_0 \mathcal{G}{\Theta}=-\frac{p_0^2 \sqrt{1+\gamma_0 \alpha_0 p_0^2}}{2+\gamma_0 \alpha_0 p_0^2}\left( \sqrt{1+\gamma_0 \alpha_0 p_0^2} \cos p_0 x\left(A e^{i q_0 t}+\bar{A} e^{-i q_0 t}\right),i \sin p_0 x\left(A e^{i q_0 t}-\bar{A} e^{-i q_0 t}\right)\right), \\
	&\mathcal{Q}_0 \mathcal{N}(\Theta, \Theta)=\left(-\frac{1+\gamma_0 \alpha_0 p_0^2}{2+\gamma_0 \alpha_0 p_0^2} \cos p_0 x\left(AB e^{i q_0 t}+\bar{A}B  e^{-i q_0 t}\right)+\frac{1+\gamma_0 \alpha_0 p_0^2}{4} \cos 2 p_0 x\left(Ae^{i q_0 t}-\bar{A} e^{-i q_0 t}\right)^2\right., \\
	& \qquad\quad\quad\quad\quad\left.-\frac{i \sqrt{1+\gamma_0 \alpha_0 p_0^2}}{2+\gamma_0 \alpha_0 p_0^2} \cos p_0x\left(A B e^{i q_0 t}-\bar{A}B e^{-i q_0 t}\right)-\frac{i\sqrt{1+\gamma_0 \alpha_0 p_0^2}}{2}  \sin 2 p_0 x\left(A^2 e^{2 i q_0 t}-\bar{A}^2 e^{-2 i q_0 t}\right)\right) .
\end{aligned}
$$
Therefore, the principal part of $\mathcal{Y}$ is then given by,
$$
\begin{aligned}
	\mu \tilde{L}_0^{-1} \mathcal{Q}_0 \mathcal{K}{\Theta}_{x t}= & \frac{\mu \gamma_0 \alpha_0 p_0^4 q_0}{\left(q_0\left(1+\alpha_0 p_0^2\right) \sqrt{1+\gamma_0 \alpha_0 p_0^2}+\beta_0p_0 \right)\left(2+\gamma_0 \alpha_0 p_0^2\right)}\\&\left( \sqrt{1+\gamma_0 \alpha_0 p_0^2} \cos p_0 x\left(A e^{i q_0 t}+\bar{A} e^{-i q_0 t}\right),i \sin p_0 x\left(A e^{i q_0 t}-\bar{A} e^{-i q_0 t}\right)\right),~~~~~~~~~~~~~~~~~~~~~~~~~~~~~~~~
\end{aligned}
$$
$$
\begin{aligned}
 \left(\gamma_0 \beta_0 \mu+\gamma_0 \alpha_0 \nu\right) \tilde{L}_0^{-1} \mathcal{Q}_0 \mathcal{G}{\Theta}=& 
-\frac{\left(\gamma_0 \beta_0 \mu+\gamma_0 \alpha_0 \nu\right)p_0^3\sqrt{1+\gamma_0 \alpha_0 p_0^2}}{\left(q_0\left(1+\alpha_0 p_0^2\right) \sqrt{1+\gamma_0 \alpha_0 p_0^2}+\beta_0p_0 \right)\left(2+\gamma_0 \alpha_0 p_0 ^2\right)}\\&\left( \sqrt{1+\gamma_0 \alpha_0 p_0^2}\cos p_0 x\left(A e^{i q_0 t}+\bar{A} e^{-i q_0 t}\right), i \sin p_0 x\left(A e^{i q_0 t}-\bar{A} e^{-i q_0 t}\right)\right),~~~~~~~~~~~~
\end{aligned}
$$
and the part which is quadratic in $(A, \bar{A},B)$ is as follows,
$$
-\beta_0 \tilde{\mathcal{L}_0}^{-1} Q_0 \mathcal{N}(\Theta, \Theta):=\left(y^{(1)}, y^{(2)}\right),
$$
with,
$$
\begin{aligned}
	y^{(1)}=& \frac{\beta_0 p_0\left(1+\gamma_0 \alpha_0 p_0^2\right)}{\left(q_0\left(1+\alpha_0 p_0^2\right) \sqrt{1+\gamma_0 \alpha_0 p_0^2}+\beta_0p_0 \right)\left(2+\gamma_0 \alpha_0 p_0^2\right)} \cos p_0 x\left(A B e^{i q_0 t}+ \bar{A}B e^{-i q_0 t}\right)+\\& \frac{|A|^2}{2}\left(\frac{1+\gamma_0 \alpha_0 p_0^2}{1+4 \gamma_0 \alpha_0 p_0^2}\right) \cos 2 p_0 x+
	\alpha_1 \cos 2 p_0 x\left(A^2 e^{2 i q_0 t}+\bar{A}^2 e^{-2 i q_0 t}\right), & \\
	y^{(2)}=& \frac{i \beta_0 p_0 \sqrt{1+\gamma_0 \alpha_0 p_0^2}}{\left(q_0\left(1+\alpha_0 p_0^2\right) \sqrt{1+\gamma_0 \alpha_0 p_0^2}+\beta_0p_0 \right)\left(2+\gamma_0 \alpha_0 p_0^2\right)} \sin p_0 x\left(A B e^{i q_0 t}-\bar{A}B e^{-i q_0 t}\right) \\ &+
	i \beta_1 \sin 2 p_0 x\left(A^2 e^{2 i q_0 t}-\bar{A}^2 e^{-2 i q_0 t}\right),
\end{aligned}
$$
where, 
$$
\begin{aligned}
	& \alpha_1=\frac{\beta_0\left(1+3 \alpha_0 p_0{ }^2\right) (1+\gamma_0\alpha_0p_0^2)^{\frac{3}{2}}}{\left(4\alpha _ { 0 } p _ { 0 } q_0 \left(2-\gamma_0+5 \alpha_0 p_0^2+4 \gamma_0 \alpha_0^2p_0^ 4\right))\right.} ,\\
	& \beta_1=\frac{-\beta_0 \left(1+2 \alpha_0 p_0^2+3 \gamma_0 \alpha_0 p_0^2+4 \gamma_0 \alpha_0^2 p_0^4\right)(1+\gamma_0\alpha_0p_0^2)}{\left(4\alpha_0 p_0 q_0\left(2-\gamma_0+5 \alpha_0 p_0^2+4 \gamma_0 \alpha_0^2 p_0^4\right)\right)}.
\end{aligned}
$$
Now, substituting $V=\mathcal{Y}(A, \bar{A}, B, \mu, \nu)$ into ($\ref{38}$), we obtain the equation in $\mathbb{C}$ of the form,
$$
h(A, \bar{A}, B, \mu, \nu)=0,
$$
such that,
\begin{equation}
	\label{41}
	 h\left(A \mathrm{e}^{\mathrm{i} q_0 \tau}, \bar{A} \mathrm{e}^{-\mathrm{i} q_0 \tau}, B, \mu, \nu\right)=\mathrm{e}^{\mathrm{i} q_0 \tau} h(A, \bar{A},B, \mu, \nu),
\end{equation}
\begin{equation}
	\label{42}
	h(A, \bar{A}, B, \mu, \nu)=\bar{h}(\bar{A}, A, B, \mu, \nu),
\end{equation}
 for any real $\tau$,  while ($\ref{39}$) gives their complex conjugate. 
It results that $h$ takes the following form where the function $H$ is real valued,
\begin{equation}
	\label{43}
h(A, \bar{A},  B, \mu, \nu)=A H\left(|A|^2,  B, \mu, \nu \right),
\end{equation}
with the complex conjugate equation ($\ref{39}$) holds for same properties. 
Then, with the same order definition as $V$, we have
$$
h_2(A, \bar{A}, B, \mu, \nu)=	\left\langle\nu\mathcal{J}\Theta-\mu\mathcal{K} \Theta_{x t}-\left(\gamma_0 \beta_0\mu+\gamma_0 \alpha_0 \nu\right) \mathcal{G}\Theta+\beta_0\mathcal{N}(\Theta, \Theta), \xi_0\right\rangle.
$$
By calculations,
$$
\begin{aligned}
	& \left\langle \mathcal{J} \Theta, \xi_0\right\rangle=2 \pi^2\left(2+\gamma_0 \alpha_0 p_0^2\right) A, \\
	& \left\langle\mathcal{K} \Theta_{x t}, \xi_0\right\rangle=4 \pi^2 p_0 q_0 \sqrt{1+\gamma_0 \alpha_0 p_0{ }^2} A, \\
	& \left\langle\mathcal{G} \Theta, \xi_0\right\rangle=-2 \pi^2 p_0^2 A,\\
	& \left\langle\mathcal{N}(\Theta, \Theta), \xi_0\right\rangle=2 \pi^2\left(1+\gamma_0 \alpha_0 p_0{ }^2\right) A B,
\end{aligned}
$$
hence one has, 
$$
h_2=2 \pi^2A\left\{-\left(2 p_0 q_0 \sqrt{1+\gamma_0 \alpha_0 p_0^2}-\gamma_0 \beta_0 p_0^2\right) \mu+\left(2+2 \gamma_0 \alpha_0 p_0^2\right) \nu+\beta_0\left(1+\gamma_0 \alpha_0 p_0^2\right) B\right\}.
$$
And there is a 3-th order term $h_3^{'}:=2 \pi^2\beta_2A|A|^2$, which is a part of $h_3$. We observe that, 
$$
2 \mathcal{N}\left(U_1, U_2\right)=\left(\left(\mathbb{I}-\pi_0\right) u_1 u_2, u_1 \eta_2+u_2 \eta_1\right),
$$
and we have the principal part of ${V}$,  which is quadratic in $(A, \bar{A},B)$ under the form,
$$
y:=ABy^{(1,1)}+\bar{A}By^{(1,-1)}+|A|^2 y^{(2,0)}+A^2 y^{(2,2)}+\bar{A}^2 y^{(2,-2)}.
$$
Therefore the coefficient $\beta_2$ is as follows,
\begin{equation}
	\label{45}
	\begin{aligned}
		\beta_2 & =\frac{\beta_0}{2 \pi^2}\left\langle2 \mathcal{N}\left(y^{(2,0)} , \xi_0\right)+2 \mathcal{N}\left(y^{(2,2)}, \bar{\xi}_0\right) , \xi_0\right\rangle\\
		&  =\beta_0\left(-\frac{1}{4}\left(1+\gamma_0 \alpha_0 p_0^2\right)-\beta_1 \sqrt{1+\gamma_0 \alpha_0 p_0{ }^2}+\frac{\alpha_1}{2}\left(1+\gamma_0 \alpha_0 p_0^2\right)\right).
	\end{aligned}
\end{equation}
Based on $h_2$ and $h_3^{'}$, we can rewrite $h$ in the following form,
$$
\begin{aligned}
	h(A, \bar{A}, B, \mu, \nu)&=AH\left(|A|^2, B, \mu, \nu\right)\\&=2\pi^2A\bigg\{-\left(2 p_0 q_0 \sqrt{1+\gamma_0 \alpha_0 p_0^2}-\gamma_0 \beta_0 p_0^2\right) \mu+\left(2+2 \gamma_0 \alpha_0 p_0^2\right) \nu\bigg.\\&\bigg.~~~+\beta_0\left(1+\gamma_0 \alpha_0 p_0^2\right)B +\beta_2|A|^2+O\left(\left(|\mu|+|v|+|B|+|A|^2\right)^2\right) \bigg\}.
\end{aligned}
$$
The complex equation $h=0$ reduces to either $A=0$ or the real equation $H=0$.
Now noticing that an easy consequence is that A = 0, corresponding to the trivial family of solutions $U^{(0)}$ (already seen), another nontrivial solutions given by the solutions of the real equation,
$$
H\left(|A|^2, B, \mu, \nu\right)=0.
$$
At principal order this will arrive at (dropping the term of $\mathrm{O}\left(\left(|\mu|+|v|+|B|+|A|^2\right)^2\right)$), 
\begin{equation}
	\label{46}
|A|^2=-\frac{1}{\beta_2}\left\{-\left(2 p_0 q_0 \sqrt{1+\gamma_0 \alpha_0 p_0^2}-\gamma_0 \beta_0 p_0^2\right) \mu+\left(2+2 \gamma_0 \alpha_0 p_0^2\right) \nu+\beta_0\left(1+\gamma_0 \alpha_0 p_0^2\right) B\right\},
\end{equation}
for arbitrary $\mu, \nu, B$ close to 0, while the bifurcation only takes place for $\beta_2\neq0$ and the right of ($\ref{46}$) $\geqslant 0$. Finally we proved the following theorem.
\begin{lem}
	\label{lem 3.1}
Consider $\left(\alpha_0, \beta_0,\gamma_0\right)$ such that,
$$
\Sigma_{(\alpha, \beta, \gamma)}:=\left\{(p, q) \in N^2, q\left(1+\alpha p^2\right)- \beta p\sqrt{1+\gamma \alpha p^2}=0\right\},
$$
has a unique element $\left(p_0, q_0\right)$. Then, for $\mu, \nu, B$ close enough to 0, where $\alpha=\alpha_0+\mu, \beta=\beta_0+\nu, \gamma=\gamma_0, B $ is the average of $\eta(x, t)$, satisfying the right of ($\ref{46}$) $\geqslant 0$, there exists a family of bifurcation standing waves of the system ($\ref{15}$) in the form,
$$
U=\mathcal{T}_\tau U_0\in H_{\natural\natural,0}^{k, e} \times H_{\natural\natural}^{k, o}, ~k \geqslant 2,
$$
where $\tau$ corresponds to a time shift with respect to $U_0$, and,
$$
\begin{aligned}
	& U_0(x, t)=\left(\eta_0, u_0\right)(x, t), \\
	& \eta_0(x, t)=2|A| \cos q_0 t \cos p_0 x+B+\mathrm{O}\{|A|(|\mu|+|\nu|+|+|A|+|B|)\}, \\
	& u_0(x, t)=2 \sqrt{1+\gamma_0 \alpha_0 p_0^2}|A| \sin q_0 t \sin p_0 x+\mathrm{O}\{|A|(|\mu|+|\nu|+|A|+|B|)\}, \\
	& |A|^2=\frac{1}{\beta_2}\left\{\left(2+2 \gamma_0 \alpha_0 p_0^2\right) \mu-\left(2 p_0 q_0 \sqrt{1+\gamma_0 \alpha_0 p_0^2}-\gamma_0 \beta_0 p_0^2\right) \nu+\beta_0\left(1+\gamma_0 \alpha_0 p_0^2\right) B\right\},
\end{aligned}
$$
where $\beta_2(\neq 0)$ is given in ($\ref{45}$).
\end{lem}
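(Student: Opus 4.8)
The plan is to execute the equivariant Lyapunov--Schmidt reduction prepared in the computations above (following the Chen--Iooss template for the coupled BBM system) and then read off the amplitude relation. Writing $(\alpha,\beta,\gamma)=(\alpha_0+\mu,\beta_0+\nu,\gamma_0)$, I would first cast the full nonlinear Bona--Smith system (\ref{15}) in the operator form (\ref{35}), whose unperturbed linear part is $\mathcal{L}_0$ and whose remaining terms are either linear in $(\mu,\nu)$ through $\mathcal{J},\mathcal{K},\mathcal{G}$ or quadratic through $\mathcal{N}$. Using the kernel description of Lemma~\ref{lem 2.3}, I split $U=\Theta+V$ with $\Theta=A\xi_0+\bar A\bar\xi_0+B\zeta_0$ spanning $\ker\mathcal{L}_0$ and $V$ lying in the orthogonal complement of $\{\xi_0,\bar\xi_0,\zeta_0\}$; here $B=\langle U,\zeta_0\rangle/(4\pi^2)$ is the average of $\eta$.

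Applying the range projection $\mathcal{Q}_0$ to (\ref{35}) separates the problem into the range equation (\ref{37}) for $V$ and the two scalar compatibility conditions (\ref{38})--(\ref{39}). The first crucial step is to solve (\ref{37}) for $V=\mathcal{Y}(A,\bar A,B,\mu,\nu)$ by the implicit function theorem. This is exactly where the feasibility analysis pays off: the bounded pseudo-inverse $\widetilde{\mathcal{L}_0}^{-1}$ exists precisely because Proposition~\ref{prop 2.1}, resting on the no-small-divisor estimate of Lemma~\ref{lem 2}, controls $(\eta_{pq},u_{pq})$ uniformly in $(p,q)$, while the quadratic map $\mathcal{N}$ stays in $H^k_{\natural\natural}$ for $k\geq 2$ by the Sobolev algebra property. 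Rewriting (\ref{37}) as $\mathcal{F}(V,A,\bar A,B,\mu,\nu)=0$ with $\mathcal{F}$ analytic and $D_V\mathcal{F}(0)=\mathbb{I}$, the implicit function theorem yields $V=\mathcal{Y}$ uniquely for small data. Since (\ref{35}) carries no zeroth- or first-order terms, $\mathcal{Y}$ begins at second order, and I would record its principal part $V_2$.

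Substituting $V=\mathcal{Y}$ into (\ref{38}) produces a single complex equation $h(A,\bar A,B,\mu,\nu)=0$. The $O(2)$ equivariance (\ref{36}) together with $\mathcal{S}\xi_0=\bar\xi_0$ and $\mathcal{T}_\tau\xi_0=e^{iq_0\tau}\xi_0$ forces the symmetry relations (\ref{41})--(\ref{42}), which imply the factorization $h=A\,H(|A|^2,B,\mu,\nu)$ with $H$ real-valued. Hence $h=0$ splits into the trivial branch $A=0$ (the family $U^{(0)}=B\zeta_0$) and the genuine branch $H=0$. Evaluating $H$ to leading order requires the pairings $\langle\mathcal{J}\Theta,\xi_0\rangle$, $\langle\mathcal{K}\Theta_{xt},\xi_0\rangle$, $\langle\mathcal{G}\Theta,\xi_0\rangle$ and $\langle\mathcal{N}(\Theta,\Theta),\xi_0\rangle$ for the quadratic part $h_2$, plus the cubic self-interaction coefficient $\beta_2$ of (\ref{45}). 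Setting $H=0$ at principal order gives the amplitude relation (\ref{46}); provided $\beta_2\neq 0$ and its right-hand side is nonnegative, this fixes $|A|\geq 0$, the leading profile of $(\eta_0,u_0)$ is read off from $\Theta$, and applying the shift $\mathcal{T}_\tau$, which commutes with the whole reduction by equivariance, delivers the full family $U=\mathcal{T}_\tau U_0$.

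I expect the genuine obstacle to be the honest computation of $\beta_2$. It demands the explicit second-order term $V_2=\mu\,\widetilde{\mathcal{L}_0}^{-1}\mathcal{Q}_0\mathcal{K}\Theta_{xt}+(\gamma_0\beta_0\mu+\gamma_0\alpha_0\nu)\widetilde{\mathcal{L}_0}^{-1}\mathcal{Q}_0\mathcal{G}\Theta-\beta_0\widetilde{\mathcal{L}_0}^{-1}\mathcal{Q}_0\mathcal{N}(\Theta,\Theta)$, from which one extracts the harmonic components $y^{(2,0)}$ and $y^{(2,2)}$ and then evaluates the bilinear pairing $\langle 2\mathcal{N}(y^{(2,0)},\xi_0)+2\mathcal{N}(y^{(2,2)},\bar\xi_0),\xi_0\rangle$. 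Tracking the $\cos p_0 x$, $\cos 2p_0 x$ and $\sin 2p_0 x$ modes through $\mathcal{Q}_0$ and $\widetilde{\mathcal{L}_0}^{-1}$ is the bookkeeping-heavy step, and the nondegeneracy $\beta_2\neq 0$ is precisely the condition certifying that the branch in (\ref{46}) is a genuine bifurcating family rather than a degenerate tangency.
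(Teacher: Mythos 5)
Your proposal follows essentially the same route as the paper: the operator form (\ref{35}), the kernel decomposition $U=\Theta+V$, solving the range equation (\ref{37}) by the implicit function theorem via the bounded pseudo-inverse $\widetilde{\mathcal{L}_0}^{-1}$, the $O(2)$-forced factorization $h=AH(|A|^2,B,\mu,\nu)$, and the explicit computation of $h_2$ and $\beta_2$ leading to (\ref{46}). The steps, the key lemmas invoked, and the identification of the $\beta_2$ computation as the technical core all match the paper's argument.
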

\begin{cor}
	\label{cor 3.1}
With the assumption of the theorem above, the form of the free surface, even in x, with B = 0 is given by $\eta=\eta_0(x,t-\tau)$ with,
$$
\begin{aligned}
&~~~~\eta_0(x,t)\\&=2|A| \cos q_0 t \cos p_0 x+
2|A|\cos q_0 t \cos p_0 x\bigg\{\frac{\mu\left(\gamma_0 \alpha_0 p_0^4 q_0 \sqrt{1+\gamma_0 \alpha_0 p_0^2}-\gamma_0\beta_0p_0^3(1+\gamma_0 \alpha_0 p_0^2)\right)}{\left(q_0\left(1+\alpha_0 p_0^2\right) \sqrt{1+\gamma_0 \alpha_0 p_0^2}+\beta_0p_0 \right)\left(2+\gamma_0 \alpha_0 p_0^2\right)}\bigg.\\&\bigg.~~~-\frac{\nu\gamma_0\alpha _0p_0^3(1+\gamma_0 \alpha_0 p_0^2)}{\left(q_0\left(1+\alpha_0 p_0^2\right) \sqrt{1+\gamma_0 \alpha_0 p_0^2}+\beta_0p_0 \right)\left(2+\gamma_0 \alpha_0 p_0^2\right)}\bigg\}+ |A|^2\bigg\{\frac{1+\gamma_0 \alpha_0 p_0^2}{2\left(1+4 \gamma_0 \alpha_0 p_0^2\right)} cos 2 p_0 x\bigg.\\&\bigg.~~~+\frac{2\beta_0\left(1+3 \alpha_0 p_0^2\right)\left(1+\gamma_0 \alpha_0 p_0^2\right)^{\frac{3}{2}}}{\left(4 \alpha_0 p_0 q_0\left(2-\gamma_0+5 \alpha_0 p_0^2+4 \gamma_0 \alpha_0^2 p_0^4\right)\right)}\cos2q_0t\cos2p_0x\bigg\}+\mathrm{O}\bigg\{(|A|+|\mu|+|\nu|)^3\bigg\},
\end{aligned}
$$
for $\mu$, $\nu$close to 0, and $|A|$ being given in ($\ref{46}$).
\end{cor}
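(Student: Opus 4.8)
The plan is to read the corollary off directly from the representation established in Lemma~\ref{lem 3.1}, specializing the general solution to $B=0$ and retaining the expansion one order beyond the leading term. Recall that the bifurcating solution is built from the decomposition $U_0=\Theta+V$ with $\Theta=A\xi_0+\bar A\bar\xi_0+B\zeta_0$ and $V=\mathcal{Y}(A,\bar A,B,\mu,\nu)$, and that the full solution is $U=\mathcal{T}_\tau U_0$. The first step is to exploit the time-shift freedom: since $\mathcal{T}_\tau$ acts on the amplitude by $A\mapsto A\mathrm{e}^{\mathrm{i}q_0\tau}$, I would choose $\tau$ so that $A$ becomes real and nonnegative, i.e. $A=|A|$. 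This is exactly what the statement $\eta=\eta_0(x,t-\tau)$ records, and it is what lets me convert every complex exponential into a cosine at the end.

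With $B=0$ and $A=|A|$, the second step is to extract the first (the $\eta$) component of $U_0$ order by order. At leading order $\Theta$ contributes $A\mathrm{e}^{\mathrm{i}q_0 t}\cos p_0 x+\bar A\mathrm{e}^{-\mathrm{i}q_0 t}\cos p_0 x=2|A|\cos q_0 t\cos p_0 x$, the principal term. The quadratic correction is the $\eta$-component of $V_2$, and here I would substitute the three explicit expressions already computed before Lemma~\ref{lem 3.1}, namely $\mu\tilde L_0^{-1}\mathcal{Q}_0\mathcal{K}\Theta_{xt}$, $(\gamma_0\beta_0\mu+\gamma_0\alpha_0\nu)\tilde L_0^{-1}\mathcal{Q}_0\mathcal{G}\Theta$, and $-\beta_0\tilde{\mathcal{L}_0}^{-1}\mathcal{Q}_0\mathcal{N}(\Theta,\Theta)=(y^{(1)},y^{(2)})$. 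Setting $B=0$ kills the $AB$- and $\bar AB$-terms in $y^{(1)}$, and $A=|A|$ yields $A\mathrm{e}^{\mathrm{i}q_0 t}+\bar A\mathrm{e}^{-\mathrm{i}q_0 t}=2|A|\cos q_0 t$ together with $A^2\mathrm{e}^{2\mathrm{i}q_0 t}+\bar A^2\mathrm{e}^{-2\mathrm{i}q_0 t}=2|A|^2\cos 2q_0 t$.

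The third step is to collect terms. The $\mathcal{K}$- and $\mathcal{G}$-contributions share the angular factor $\cos p_0 x\,(A\mathrm{e}^{\mathrm{i}q_0 t}+\bar A\mathrm{e}^{-\mathrm{i}q_0 t})$, so they combine into a single $2|A|\cos q_0 t\cos p_0 x$ multiplied by the braced coefficient in the statement; there the $\mu$-coefficient is the sum of the $\mathcal{K}$-term and the $\mu$-part of the $\mathcal{G}$-term (using $\sqrt{1+\gamma_0\alpha_0 p_0^2}\cdot\sqrt{1+\gamma_0\alpha_0 p_0^2}=1+\gamma_0\alpha_0 p_0^2$), while the $\nu$-coefficient comes solely from the $\mathcal{G}$-term. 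The surviving $\mathcal{N}$-contribution is the $y^{(2,0)}$ part $\tfrac{|A|^2}{2}\tfrac{1+\gamma_0\alpha_0 p_0^2}{1+4\gamma_0\alpha_0 p_0^2}\cos 2p_0 x$ together with the $y^{(2,2)}$ part carrying coefficient $2\alpha_1$, which reproduces the $|A|^2\{\cdots\}$ bracket once the definition of $\alpha_1$ is inserted. Finally, the remainder $\mathrm{O}\{(|A|+|\mu|+|\nu|)^3\}$ is inherited from (\ref{40}), since with $B=0$ the only small parameters left are $|A|,\mu,\nu$.

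The computation is essentially bookkeeping, so the main obstacle is not conceptual but the careful merging of the $\mu$-dependence across the $\mathcal{K}$- and $\mathcal{G}$-terms and the verification that the $y^{(2,2)}$ coefficient collapses to the stated rational expression in the denominator $4\alpha_0 p_0 q_0(2-\gamma_0+5\alpha_0 p_0^2+4\gamma_0\alpha_0^2 p_0^4)$. The one genuine, if minor, point of principle is the phase normalization $A=|A|$, which must be in force \emph{before} the exponentials are rewritten as cosines and which is precisely why the result is stated for $\eta_0(x,t-\tau)$ rather than for $\eta_0$ itself.
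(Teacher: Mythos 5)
Your proposal is correct and follows exactly the route the paper takes (the paper gives no separate proof of the corollary; it is read off from the explicit formulas for $\mu\tilde L_0^{-1}\mathcal{Q}_0\mathcal{K}\Theta_{xt}$, $(\gamma_0\beta_0\mu+\gamma_0\alpha_0\nu)\tilde L_0^{-1}\mathcal{Q}_0\mathcal{G}\Theta$ and $y^{(1)}$ computed before Lemma~\ref{lem 3.1}, specialized to $B=0$ with the phase of $A$ normalized by a time shift). Your bookkeeping of the $\mu$- and $\nu$-coefficients and of the $|A|^2$ bracket (including the factor $2\alpha_1$) matches the stated expression, and the cubic remainder is correctly inherited from (\ref{40}).
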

\begin{rem}
If we consider the higher-order terms in $V(A, \bar{A}, B, \mu, \nu)$ as follows, recalling from ($\ref{37}$),
$$
\begin{aligned}
V_3=&-\nu\tilde{\mathcal{L}}_0^{-1} \mathcal{Q}_0 \mathcal{J}{V_2}+\mu\tilde{\mathcal{L}}_0^{-1} \mathcal{Q}_0 \mathcal{K}{V_2}_{x t}+\left(\gamma_0 \beta_0\mu+\gamma_0 \alpha_0 \nu\right) \tilde{\mathcal{L}}_0^{-1}\mathcal{Q}_0 \mathcal{G}{V_2}+\gamma_0\mu\nu \tilde{\mathcal{L}}_0^{-1}\mathcal{Q}_0 \mathcal{G}{\Theta}\\&-\beta_0 \tilde{\mathcal{L}}_0^{-1} \mathcal{Q}_0 2\mathcal{N}(V_2, \Theta)-\nu\tilde{\mathcal{L}}_0^{-1} \mathcal{Q}_0 \mathcal{N}(\Theta, \Theta),~~~~~~~~~~~~~~~~~~~~~~~~~~~~~~~~~~~~~~~~~~~~~~~~~~~~~~~~~~~~~~~~~~~~~
\end{aligned}
$$
$$
\begin{aligned}
	V_4=&-\nu\tilde{\mathcal{L}}_0^{-1} \mathcal{Q}_0 \mathcal{J}{V_3}+\mu\tilde{L}_0^{-1} \mathcal{Q}_0 \mathcal{K}{V_3}_{x t}+\left(\gamma_0 \beta_0\mu+\gamma_0 \alpha_0 \nu\right) \tilde{\mathcal{L}}_0^{-1}\mathcal{Q}_0 \mathcal{G}{V_3}+\gamma_0\mu\nu\tilde{L}_0^{-1}\mathcal{Q}_0 \mathcal{G}{V_2}\\&-\beta_0 \tilde{\mathcal{L}}_0^{-1} \mathcal{Q}_0 2\mathcal{N}(V_3, \Theta)-\beta_0 \tilde{\mathcal{L}}_0^{-1} \mathcal{Q}_0 \mathcal{N}(V_2, V_2)-\nu\tilde{\mathcal{L}}_0^{-1} \mathcal{Q}_0 2\mathcal{N}(V_2, \Theta),~~~~~~~~~~~~~~~~~~~~~~~~~~~~~~~~~~~~~~~~
\end{aligned}
$$
$$
\begin{aligned}
	V_n=&-\nu\tilde{\mathcal{L}}_0^{-1} \mathcal{Q}_0 \mathcal{J}{V_{n-1}}+\mu\tilde{\mathcal{L}}_0^{-1} \mathcal{Q}_0 \mathcal{K}{V_{n-1}}_{x t}+\left(\gamma_0 \beta_0\mu+\gamma_0 \alpha_0 \nu\right) \tilde{\mathcal{L}}_0^{-1}\mathcal{Q}_0 \mathcal{G}{V_{n-1}}+\gamma_0\mu\nu \tilde{\mathcal{L}}_0^{-1}\mathcal{Q}_0 \mathcal{G}{V_{n-2}}\\&-\beta_0 \tilde{\mathcal{L}}_0^{-1} \mathcal{Q}_0 2\mathcal{N}(V_{n-1}, \Theta)-\nu\tilde{\mathcal{L}}_0^{-1} \mathcal{Q}_0 2\mathcal{N}(V_{n-2}, \Theta)\\&-
	\Sigma_{i,j\geqslant2}^{i+j=n}\beta_0
	 \tilde{\mathcal{L}}_0^{-1} \mathcal{Q}_0 \mathcal{N}(V_{i}, V_{j})-
	 \Sigma_{i,j\geqslant2}^{i+j=n-1}\nu
	 \tilde{\mathcal{L}}_0^{-1} \mathcal{Q}_0 \mathcal{N}(V_{i}, V_{j}),~n\geqslant5,~~~~~~~~~~~~~~~~~~~~~~~~~~~~~~~~~~~~~~~
\end{aligned}
$$
then the family of solutions of system ($\ref{15}$) which are more precise can be written as,
$$
U=A \xi_0+\bar{A} \bar{\xi}_0+B \zeta_0+\Sigma_{i=2}^{n}V_i+\mathrm{O}\left(\left(|A|+|B|+|\mu|+|\nu|\right)^{n+1}\right).
$$
Along with the higher-order terms in $h(A, \bar{A}, B, \mu, \nu)$, where $h=0$ represents the compatibility condition,
$$
\begin{aligned}
	h_3= & \left\langle\nu \mathcal{J} V_2-\mu \mathcal{K} V_{2 x t}-\left(\gamma_0 \beta_0 \mu+\gamma_0 \alpha_0 \nu\right) \mathcal{G} V_2-\gamma_0 \mu \nu \mathcal{G} \Theta+\beta_02\mathcal{N}(V_2, \Theta)+\nu\mathcal{N}(\Theta, \Theta), \xi_0\right\rangle,~~~~~~~~~~~~~~~~~~~~~~~~~~~~~~~~~~~~~~~~~~~~~~~~~~~~~~~~~~~~~~~~~~~~~~~
\end{aligned}
$$
$$
\begin{aligned}
	h_4= & \left\langle\nu \mathcal{J} V_3-\mu \mathcal{K} V_{3 x t}-\left(\gamma_0 \beta_0 \mu+\gamma_0 \alpha_0 \nu\right) \mathcal{G} V_3-\gamma_0 \mu \nu\mathcal{G} V_1+\beta_02\mathcal{N}(V_3, \Theta)+\beta_0\mathcal{N}(V_2, V_2)+\nu2\mathcal{N}(V_2, \Theta), \xi_0\right\rangle,~~~~~~~~~~~~~~~~~
\end{aligned}
$$
$$
\begin{aligned}
	h_n= & \left\langle\nu \mathcal{J} V_{n-1}-\mu \mathcal{K} V_{{n-1} x t}-\left(\gamma_0 \beta_0 \mu+\gamma_0 \alpha_0 \nu\right) \mathcal{G} V_{n-1}-\gamma_0 \mu \nu \mathcal{G}V_{n-2} \right. \\
	& \left.+\beta_02\mathcal{N}(V_{n-1}, \Theta)+\nu2\mathcal{N}(V_{n-2}, \Theta)+
	\Sigma_{i,j\geqslant2}^{i+j=n}\beta_0
 \mathcal{N}(V_{i}, V_{j})+	\Sigma_{i,j\geqslant2}^{i+j=n-1}\nu
 \mathcal{N}(V_{i}, V_{j}), \xi_0\right\rangle,~n\geqslant5,~~~~~~~~~~~~~~~~~~~~~~~~~~~~~~
\end{aligned}
$$
and therefore one has the bifurcation equation in the below,
$$
\begin{aligned}
	h(A, \bar{A}, B, \mu, \nu)= & A H\left(|A|^2, B, \mu, \nu\right) \\
	=&A\left\{\Sigma_{i=1}^{n-1} H_i\left(|A|^2, B, \mu, \nu\right)+O\left(\left(|\mu|+|\nu|+|B|+|A|^2\right)^n\right)\right\}\\=&0,
\end{aligned}
$$
where $H_i$ comes from $h_{i+1}$,~$i=1, \cdots, n-1$, and more precise $|A|^2$ is obtained when $H = 0$ via implicit function theorem.
\end{rem}
\section{Example}
In this section, we provide an example of the standing waves based on Theorem $\ref{lem 3.1}$ and Corollary $\ref{cor 3.1}$. We consider $\alpha_0= 5$, $\beta_0=4$ and $\gamma_0=\frac{1}{4}$, satisfying $\beta_2\neq 0$. It can be easily verified that there is only one element in $\Sigma_{(\alpha, \beta, \gamma)}$, that is, 
$$
(p_0,q_0)=(1,1).
$$

We next choose $(B, \mu, \nu)=(0,-0.1,-0.1)$ which satisfies the right of ($\ref{46}$) $\geqslant 0$, then, according to ($\ref{46}$), it follows,
$$
|A| \approx 0.572394.
$$

\begin{figure}[H]
	\centering
	\begin{minipage}{\textwidth}
		\centering
		\begin{subfigure}[t]{0.49\textwidth} 
			\includegraphics[width=\linewidth, keepaspectratio]{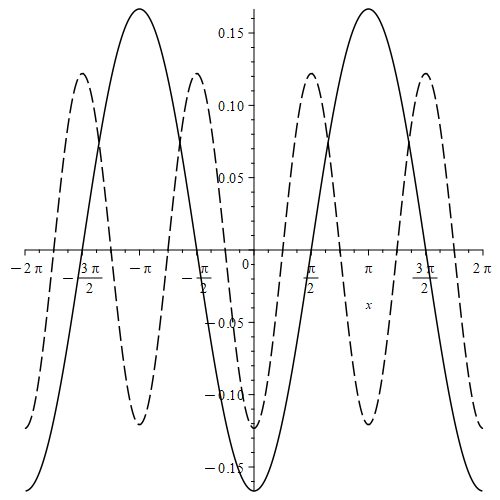} 
			\vspace{4pt}  
			\caption{\footnotesize The patterns showing in Corollary $\ref{cor 3.1}$ at $t=8$, the solid curve describes the terms of leading order for $\eta$ in $O(|A|)$ and the dash curve represents the terms purely in $\mathrm{O}\{|A|(|\mu|+|\nu|+|A|)\}$}
		\end{subfigure}
		\hfill
		\begin{subfigure}[t]{0.49\textwidth}
			\includegraphics[width=\linewidth, keepaspectratio]{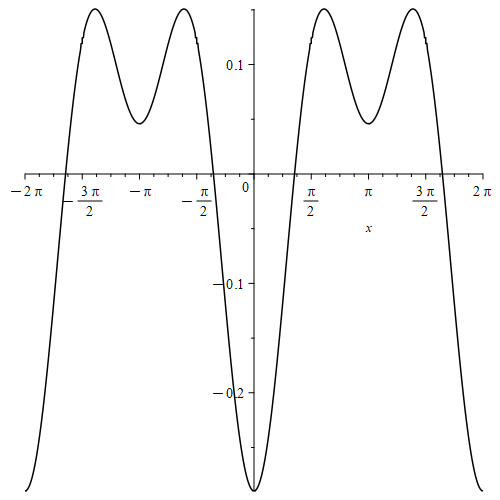}
			\vspace{4pt}
			\caption{\footnotesize The standing waves   given in the  Corollary at $t=8$.}
		\end{subfigure}
	\end{minipage}
	\caption{The the standing waves surface.}
	\label{fig1}
\end{figure} 
%
%\begin{figure}[H]
%	\centering
%	\begin{minipage}[c]{0.4\textwidth}
%		\centering
%		\includegraphics[width=0.9\textwidth]{figure/1.png}
%		\subcaption{}
%		\label{fig:sample-figure-a}
%	\end{minipage}
%	\begin{minipage}[c]{0.4\textwidth}
%		\centering
%		\includegraphics[width=0.9\textwidth]{figure/2.png}
%		\subcaption{}
%		\label{fig:sample-figure-b}
%	\end{minipage}
%	 \caption{Figure (a) The patterns showing in Corollary $\ref{cor 3.1}$ at $t=8$, the solid curve describes the terms of leading order for $\eta$ in $O(|A|)$ and the dash curve represents the terms purely in $\mathrm{O}\{|A|(|\mu|+|\nu|+|A|)\}$. Figure (b) presents The standing waves   given in the  Corollary at $t=8$.}
%	\label{fig:sample-fig}
%\end{figure}

\smallskip

\centerline{\textbf{Acknowledgements}}

S. Li is supported by the National Natural Science Foundation of China (no. 12001084 and no. 12071061).
	\section*{Conflict of interest statement}
All authors declare that they have no conflicts of interest.

\section*{Data availability}
We confirm that no datasets are generated or analysed during the current study.

\end{document}